\numberwithin{equation}{section}
\newcommand{\teq}{\arabic{section}.\arabic{equation}}
\newcommand{\teql}{\Alph{section}.\arabic{equation}}
\newcommand{\sqr}[2]{{\vcenter{\vbox{\hrule height.#2pt\hbox{\vrule width.#2pt
height#1pt \kern#1pt\vrule width.#2pt}\hrule height.#2pt}}}}
\newcounter{eqcount}
\newenvironment{edesc}{\refstepcounter{equation}\begin{enumerate}}%
{\end{enumerate}}
\newenvironment{triv}{\refstepcounter{equation}\begin{list}%
{{\hbox{\rm(\teq)\ }}} \item }{\end{list}}
\newcommand{\ring}[1]{{\mathbb #1}}
\newcommand\bZ{{\ring{Z}}}
\newcommand\bC{{\ring{C}}} \newcommand\bR{{\ring{R}}}
\newcommand\bF{{\ring{F}}} \newcommand\bQ{{\ring{Q}}}
\newcommand{\csp}[1]{{\mathbb #1}}
\newcommand{\tsp}[1]{{\mathcal #1}}
\newcommand{\prP}{\csp{P}}
\newcommand{\sO}{{\tsp{O}}} 
\newcommand{\sQ}{\tsp{Q}}
 \newcommand{\sH}{{\tsp {H}}}
\newcommand{\eql}[2]{{\rm (\ref{#1}\ref{#2})}} %ref. to long equation
\newcommand{\vect}[1]{{\pmb #1}} 
 \newcommand{\bg}{\vect{g}}
\newcommand{\bp}{{\vect{p}}}
\newcommand{\row}[2]{{#1_1,\ldots,#1_{#2}}}
\newcommand{\smatrix}[4]{{\big(\begin{array}{cc}
\!\lower2pt\hbox{$\scriptstyle#1$} &\lower2pt\hbox{$\scriptstyle#2$}\!
\\\! \raise2pt\hbox{$\scriptstyle#3$} &\raise2pt\hbox{$\scriptstyle#4$}
\!\end{array}\big)}}
\newcommand{\texto}[1]{{\textr{#1}}}
 \newcommand{\SL}{\texto{SL}}
 \newcommand{\ind}{\texto{ind}}
\newcommand{\PSL}{\texto{PSL}} \newcommand{\PGL}{\texto{PGL}}
\newcommand{\Hom}{\texto{Hom}} \renewcommand{\ni}{\texto{Ni}}
 \newcommand{\Pic}{\texto{Pic}}
\newcommand{\textr}[1]{{\text{\rm #1}}}
 \newcommand{\ord}{\textr{ord}}
 \newcommand{\inn}{\textr{in}}
\newcommand{\rd}{\texto{rd}}
\newcommand{\tG}[1]{{}_{#1}\tilde G}
\newcommand{\ot} {\otimes}
\newcommand{\textb}[1]{{\text{\bf #1}}}
\newcommand{\bfC}{{\textb{C}}}
\newcommand{\longmapright}[2]{\smash{\mathop{\hbox to
#2pt{\rightarrowfill}}\limits^{#1}}}
\newcommand{\longmapleft}[2]{\smash{\mathop{\hbox to
#2pt{\leftarrowfill}}\limits^{#1}}}
\newcommand{\mapright}[1]{\smash{\mathop{\longrightarrow}\limits^{#1}}}
   \newcommand{\nm}{{-}}
\newcommand{\lrang}[1]{{\langle #1\rangle}}
\newcommand{\eqdef}{\stackrel{\text{\rm def}}{=}}
\newfont{\sevenrm}{cmr7}
\newfont{\bsevenrm}{cmbx7}
\newfont{\mathseven}{cmsy7}
\newfont{\bigmath}{cmsy10 scaled 1200}
\newfont{\fiverm}{cmr5}
\newfont{\bfiverm}{cmbx5}
\newfont{\hel}{cmbx10 scaled 1200}
\newfont{\eu}{eufb10}
\newfont{\sseu}{eufm5}
\newfont{\seu}{eufm7}
\newfont{\Cal}{cmmib10}
\newfont{\sCal}{cmmib7}
\newfont{\zch}{eusb10}
\theoremstyle{plain}
\newtheorem{thm}{Theorem}[section] %numbering for subunit in last position
\newtheorem{lem}[thm]{Lemma}
\newtheorem{prop}[thm]{Proposition}
\newtheorem{cor}[thm]{Corollary}
\theoremstyle{definition}
\newtheorem{defn}[thm]{Definition}
\newtheorem{exmp}[thm]{Example}
\newtheorem{guess}[thm]{Conjecture}
\newtheorem{prob}[thm]{Problem}
\theoremstyle{remark}
\newtheorem{rem}[thm]{Remark}
\newcommand{\xs}{\times^s\!}
\newcommand{\one}{{\pmb 1}} 
\newcommand{\Ind}{{\text{\rm Ind}}} 
\newcommand{\rank}{{\text{\rm Rank}}} 
\newcommand{\inv}{{\text{\rm inv}}} 
\newcommand{\aug}{{\text{\rm aug}}} 
\newcommand{\soc}{{\text{\rm Soc}}} 
\newcommand{\rad}{{\text{\rm Rad}}} 
\newcommand{\C}{{\text{\rm C}}}
\newcommand{\sh}{{\text{\bf sh}}}
\renewcommand{\phi}{{\varphi}}
\newcommand{\mpr}{{\text{\bf mp}}} % For middle product.
\begin{document} 
\font\eightrm=cmr8  \font\eightit=cmsl8 \let\it=\sl  

%\NoBlackBoxes 
\title[Relatively nilpotent extensions]{Moduli of relatively nilpotent algebraic 
extensions}

\author[M.~Fried]{Michael D.~Fried} 
 
\date{\today} 
\newcommand\rk{{\text{\rm rk}}} 
\newcommand{\TG}[2]{{{}_{#1}^{#2}\tilde{G}}} 
\newcommand{\vsmatrix}[4]{{\left(\smallmatrix #1 & #2 \\ #3 & #4  
\\ \endsmallmatrix \right)}} 
\newcommand{\tbg}[1]{{{}^{#1}\tilde\bg}} 
\newcommand{\Inn}{{\text{\rm Inn}}} 
\newcommand{\af}{{\text{\rm af}}} 
\newcommand{\f}{{\text{\rm f}}} 
% \matrix a & b \\c & d \\ \endmatrix  
% wrap with \left(   \right) for parens matrix. 
 
\address{UC Irvine, Irvine, CA 92697, USA} 
\email{mfried\@math.uci.edu}  

\begin{abstract} The  Main Conjecture on Modular Towers: For a prime
$p$, and finite $p$-perfect group $G$, high levels of a $(G,p)$-Modular Tower 
have no rational
points. When $p$ is an odd prime and $G$ is the dihedral
group $D_p$ we call the towers  Hyper-modular.
\cite{BFr} proves cases of the Main Conjecture for Modular Towers with $p=2$ and 
$G$ an
alternating group. We use differences between Hyper-modular
and general Modular Towers to give new moduli applications. Their
similarities give these applications insights successful with modular curves. 

The universal $p$-Frattini cover of $G$ and a collection of $p'$
conjugacy classes define a
particular Modular Tower's levels. The number of components at a level and how 
cusp ramification
grows from level to level relate to the appearance of Schur multipliers. 
\cite{BFr} 
applies formulas of the author and Serre to spin covers to handle the case 
$p=2$.
We give here an  exposition on going beyond that analysis. 

As with modular curves, the tower levels are moduli spaces.   
We use a Demjanenko-Manin result and a construction from \cite{FrMT}  to 
more directly approach rational points than does 
Falting's Theorem.

\end{abstract}

\subjclass{Primary  11F32,  11G18, 11R58, 14G32; Secondary 20B05, 
20C25, 20D25, 20E18, 20F34}

\thanks{Conversations with Darren Semmen were extremely valuable in reporting on 
progress
in this exposition extending my talks at RIMS  in October 2001. Thanks to NSF 
Grant
\#DMS-9970676 and
\#DMS-0202259 and RIMS for support. Special thanks to Yasutaka Ihara and his 
four colleagues 
Makoto Matsumoto, Shin Mochizuki, Hiroaki Nakamura and Akio Tamagawa for an 
apposite and challenging
conference.} 

\maketitle 
\def\addcontentsline#1#2#3{%
\addtocontents{#1}{\protect\contentsline{#2}{#3}{}}}

\setcounter{tocdepth}{2}
\tableofcontents

\section{Basic goals of Modular Towers} Suppose the prime $p$ divides the order 
of  a finite group $G$. We
say $G$ is 
$p$-perfect if $G$ has no $\bZ/p$ quotient. A {\sl perfect\/} group is $p$-
perfect for
each prime dividing its order. Equivalently,  $G$ equals its commutator
subgroup. A $p'$ conjugacy class (subgroup, element, etc.) of $G$ has 
elements of order prime to $p$. Being $p$-perfect is equivalent to having $p'$
generators. 

We think of {\sl Hyper-modular Towers\/} relating to {\sl Modular Towers\/} as 
$(D_p, p)$ ($p$ odd) relates to $(G,p)$, with $G$ a $p$-perfect finite group. A
Hyper-modular Tower comes with an even number 
$r$ of involution conjugacy classes. A 
modular curve tower has $r=4$ and its $k$th level is the space $X_1(p^{k+1})$ 
without its cusps.  A  
$(G,p)$ Tower has a collection of $p'$ conjugacy classes $\bfC=(\row \C r)$.   
We call $r$
the {\sl p-dimension\/} of a Nielsen class. Then $r-3$ is its {\sl j-dimension}. 
The  p-dimension 
is the dimension of the Hurwitz space tower levels. Each level is an
affine cover of $\prP^r\setminus D_r$, projective $r$ space minus its 
discriminant locus.  

Let $\prP^1_z=\bC_z\cup
\{\infty\}$ be projective 1-space with a uniformizing variable $z$.
Recall the space $J_r$: $\PGL_2(\bC)$ equivalence classes of unordered distinct 
points $\prP^1_z$. Then,
the {\sl reduced\/} Hurwitz space at the $k$th level of a
$(G,p,\bfC)$ modular tower has dimension equal to the j-dimension. We explain 
below the notation 
$\sH(G_k,\bfC)^{\inn,\rd}=\sH(G_k,\bfC)^\rd$ ($G_0=G$) for this inner
Hurwitz space. It has an affine (so surjective with finite fibers) map to 
$J_{r}$. 

When $r=4$ the Modular Tower levels are covers of 
$\prP^1_j\setminus\{\infty\}=J_4$,
the classical $j$-line, and they are quotients of the upper half-plane by a 
finite index subgroup of
$\PSL_2(\bZ)$. We normalize so their ramified points are $j=0, 1, \infty$ with 
any ramified points over 0
(resp.~1) having index 3 (resp.~2). The points of the projective completion of 
the curve levels  
over
$\infty$ are {\sl cusps\/}. Computing their {\sl widths\/} (ramification 
indices) is subtle and significant
(\S\ref{ramindex}). 

\subsection{Topics of this paper and the Main Conjecture} An hypothesis on $G_0$ 
seeding a Modular Tower is
always in force, unless otherwise said, throughout this paper: That $G_0$ is a 
centerless $p$-perfect group. We
often remind the reader of that by using the phrase $p$-perfect, though it also 
means centerless so as to apply
the  consequence from Thm.~\ref{pdivis} that all the $G_k\,$s also are 
centerless. The Main Conjecture is that
for
$(G,p,\bfC)$ above (with $G=G_0$ a $p$-perfect centerless group) and
$k$ large,  
$\sH(G_k,\bfC,p)^{\inn,\rd}$ has no rational points. We state a geometric 
version. 

\begin{guess}[Geometric Conjecture] \label{gconj}  For $k$ large, nonsingular 
projective completions
of all components of a Modular Tower at level $k$ have general type.  
\end{guess} 

Characteristic
$p$-Frattini covers $\{G_k\}_{k=0}^\infty$ of $G=G_0$  define the
reduced Modular Tower levels, $\sH(G_k, \bfC)^{\inn,\rd}=\sH_k^\rd$. Since 
$\bfC$ consists of $p'$
conjugacy classes, each $\C_i$ pulls back to unique conjugacy classes in each
covering group $G_k\to G_0$. That means the notation for
$(G_k,\bfC)$  reduced Nielsen classes makes sense (\S\ref{shincidence}). 

When $r=4$, the genus of a level $k$ component depends on elliptic
ramification and cusp ramification, respectively,
$\ind(\gamma_0)+\ind(\gamma_1)$ and  $\ind(\gamma_\infty)$  in
\eql{compgenus}{compgenusc}. For Modular Towers these engage us in a moduli
interpretation related to $(G_k,p)$. Though this is akin to the
simplest case $(D_{p^{k+1}},p)$ for modular curves, there are four new
challenges. 

Two deal with the moduli interpretation of $\gamma_0$
and $\gamma_1$ fixed points and for {\sl orbit shortening\/}. Our goal is to 
find a
level at which each disappears (\S\ref{ellipram}). The 3rd is to locate those
$p$-divisible cusps at level $k+1$ that lie above level $k$ cusps 
that aren't $p$-divisible. These are the contributors to the
$U_i\,$s of \eqref{genbound}. We nailed these contributions in the
\cite{BFr} examples for $p=2$ as coming from spin covers of finite groups. This 
used formulas of the
author and Serre. A harder  spin cover analysis then found the
components of these example Modular Tower levels.

Here is a list of  topics in this paper. Several of these include the case 
$r\ge 5$.  

\begin{edesc} \label{topics} \item \label{topics3}  \S\ref{shap}: Computing 
effectively the characteristic
$p$-Frattini module
$M_k$ inductively defining $G_{k+1}$ from $G_k$ (hinted at in 
\cite[Rem.~2.10]{FrMT}). 

\item \label{topics1} \S\ref{MainConjTruth}:  Heuristics for the Main
Conjecture on  Modular Tower levels when 
the j-dimension is 1. 
\cite[\S 8.1]{BFr}. 
\item \label{topics4} \S\ref{typeschur}: Classifying for all $p$ how 
 Schur multipliers figure in a change of $p$-divisible cusps from level $k$ to 
level $k+1$.  
\item \label{topics2} \S \ref{mandem}: The Demjanenko-Manin effective 
diophantine approach  
when the j-dimension is 1, and its potential for all Modular Towers. 
\item \label{topics5} \S\ref{projsystems}: Situations assuring there are 
projective systems of
absolutely irreducible $\bQ$ components on each Modular Tower level.  
\end{edesc}

\subsection{Reduced Hurwitz space components} \label{ramindex}
Each Modular Tower comes with 
$p'$ conjugacy classes $\bfC=(\row {\C} r)$. These give the Nielsen class 
elements:
$$\ni=\ni(G,\bfC)=\{\bg= (\row g r) \mid g_1\cdots g_r=1, 
\lrang{\bg}=G \text{\ 
and\ } \bg\in \bfC\}.$$ The notation $\bg\in\bfC$ means there is a $\pi\in S_r$ 
with $g_{(i)\pi}\in
\C_i$, $i=1,\dots,r$. The elements of {\sl inner\/} equivalence classes are the 
collections
$\{h^{-1}\bg h\}_{h\in G}$. Suppose $r$ is even and $G$ has $r/2$ conjugacy 
classes $\row {\C'} {r/2}$ so 
with elements $g_i\in \C'_i$, $i=1,\dots,r/2$, that generate $G$. Let $\bfC$  be 
the
conjugacy classes with $\C_{2i}=\C_i'$, $\C_{2i-1}=(\C_i')^{-1}$, 
$i=1,\dots,r/2$. Then, $\ni(G,\bfC)$
contains this Harbater-Mumford representative (H-M rep.): $(g_1^{-1},g_1,g_2^{-
1},g_2,\dots,
g_{r/2}^{-1},g_{r/2})$. The relation to cusps of H-M reps.~appears repeatedly in 
\cite{BFr}.

Many applications use Hurwitz spaces with points that are equivalence classes of
$(G,\bfC)$ covers having branch cycle descriptions from a permutation 
representation $T:G\to S_n$. Then
the covers have degree
$n$ and the corresponding equivalence classes $\{h^{-1}\bg h\}_{h\in 
N_{S_n}(G)}$ with $N_{S_n}(G)$ the
normalizer of $G$ in $S_n$. Example: The case with $G=D_p$ ($p$ odd, $r=4$) 
above produces the
modular curve
$Y_0(p)=X_0(p)\setminus \{\text{cusps}\}$ using 
the standard degree
$p$ representation, $N_{S_p}(D_p)=\bZ/p\xs (\bZ/p)^*$ and reduced equivalence 
(below) of covers. When,
however,  we equivalence only by conjugation by
$D_p$ (inner classes), the result is $Y_1(p)$. Permutation representations 
appear in any precise
analysis of Modular Towers. The Main Conjecture, however, assumes reduced inner 
equivalence on Nielsen
classes.We concentrate here on that case; conjugation by $G$ gives Nielsen 
classes. 

When 
$r$ is at least 4, a braiding action gives important invariants of 
 the reduced Hurwitz space
$\sH(G,\bfC)^\rd$. 
Each component corresponds to an
orbit for the action of two operators:  
\begin{edesc} \item The shift: $(\row g r)\mapsto (g_2,\dots,g_r,g_1)=(\bg)\sh$; 
and 
\item The 2-twist: $(\row g r)\mapsto (g_1,g_2g_3g_2^{-
1},g_2,\dots,g_r)=(\bg)\gamma_\infty$. 
\end{edesc}
You may replace the 2-twist by the $i$-twist $q_i$, $1\le i\le {r-1}$. Just one 
twist and
the shift generate all the braidings. The cases $r=4$ and $r\ge 5$ differ 
slightly. 

\subsection{The $\sh$-incidence matrix for $r=4$} \label{shincidence} 
\cite[Prop.~4.4]{BFr} shows how to
compute the genuses of $\sH(G,\bfC)^\rd$ components.  This uses a Klein 4-group
$\sQ''$ that acts on Nielsen classes. We give its generators in $\row q {r-1}$
notation:
\begin{edesc} \item $q_1q_3^{-1}$: $(g_1,\dots,g_4)\mapsto (g_1g_2g^{-
1}_1,g_1,g_3g_4g_3^{-1},g_3)$. 
\item $\sh^2$: $(g_1,\dots,g_4)\mapsto (g_3,g_4,g_1,g_2)$. 
\end{edesc}

Form {\sl
reduced\/} (inner) Nielsen classes:
$G\backslash
\ni(G,\bfC)/\sQ''=\ni(G,\bfC)^\rd$. With $\gamma_1=\sh$ and $\gamma_\infty$ the 
2-twist (or middle
twist) acting on $\ni(G,\bfC)^\rd$, we draw conclusions for the  $\bar
M_4=\lrang{\gamma_1,\gamma_\infty}$ action. 
\begin{edesc} \label{compgenus} \item \label{compgenusa} Components of 
$\sH(G_k,\bfC)^\rd$ correspond to
$\bar M_4$ orbits $\bar O$ on
$\ni(G_k,\bfC)^\rd$. Use $\sH_{k,\bar O}$ for the level $k$ component 
corresponding to $\bar O$.  
\item \label{compgenusb} The cusps of $\sH_{k,\bar O}$ correspond to 
$\gamma_\infty$ orbits $\row O t$ on
$\bar O$;  cusp widths are the orbit lengths.  
\item \label{compgenusc} The genus $g_{\bar O}$ of $\sH_{k,\bar O}$ appears in 
the formula 
$$2(|\bar O|+g_{\bar O}-1)=\ind(\gamma_0) + \ind(\gamma_1) + 
\ind(\gamma_\infty), \text{ with }
\gamma_0=(\gamma_1\gamma_\infty)^{-1}.$$
\end{edesc} 

List as $\row O u$ all $\gamma_\infty$ orbits on $\ni(G,\bfC)^\rd$, without 
regard in which $\bar M_4$
orbit they fall. The symbol $(O_i)\sh$ means to apply $\sh$ to each reduced 
equivalence class in
$O_i$. The $(i,j)$ term of the 
$\sh$-incidence matrix
$A(G,\bfC)$ is  
$|(O_i)\sh\cap O_j|$. As $\sh$ has order two on reduced Nielsen classes, this is 
a symmetric
matrix. Reorder $\row O u$ to arrange $A(G,\bfC)$ into blocks along the 
diagonal. Each
block corresponds to an irreducible component of $\sH(G,\bfC)^\rd$ 
\cite[Lem.~2.26]{BFr}.   
\cite[\S 2.10]{BFr} shows how efficient is the shift incidence matrix in 
computing orbits by doing the  case
$(A_5,\bfC_{3^4})$ with $\bfC_{3^4}$ indicating four repetitions of the 
conjugacy class of 3-cycles. The
harder case $(G_1,\bfC_{3^4})$ appears in \cite[\S 8.5]{BFr}. 
The $\sh$-incidence matrix works for general $r$ similarly, though for  $r\ge 5$ 
there is 
no corresponding group $\sQ''$ and the 
matrix is no longer symmetric
\cite[\S2.10.2]{BFr}.

\section{$p$-perfect groups and associated 
modular curve-like towers} Let $G$ be a finite group with $p$  a prime  dividing 
$|G|$. Suppose
$r\ge 3$ and we have an $r$-tuple of $p'$ conjugacy classes of
$G=G_0$. This produces a natural tower $\{\sH_k\}_{k=0}^\infty$ of affine
moduli spaces covering $\prP^r\setminus D_r$. If $G$ is $p$-perfect, then 
Thm.~\ref{pdivis} says each tower level is a fine moduli space. 

\subsection{Module theory for the split case} \label{psplit} With $\bZ_p$ the 
$p$-adic
integers, let  $-1$ acting on $\bZ_p$ by multiplication. For $p$ odd, $\bZ_p\xs 
\{\pm1\}\to \bZ/p\xs
\{\pm 1\}$ is a cover of $p$-perfect groups. With $p$ dividing the order of $G$, 
we generalize this by
considering the universal
$p$-Frattini  cover $\phi: \tG p\to G_0$. Then,  $\phi$ has a pro-free pro-$p$
kernel. In the  
$p$-split case, $G=G_0=P_0\xs H$ with $P_0$ the $p$-Sylow of $G$ and $H$ is a 
$p'$ group. The 
$H$ action extends to the minimal pro-free pro-$p$ cover $\phi: \tilde P\to P_0$
\cite[Prop.~5.3]{BFr}. We  explain the characteristic groups in this easy case.  

We inductively consider $\ker_0=\ker(\phi)$ and $\ker_k$,  the $k$th iterate of 
the Frattini subgroup of $\ker_0$. So, $\ker_1$ is the Frattini 
subgroup of $\ker_0$, generated by $p$th powers and commutators of 
$\ker_0$; $\ker_2$ is he Frattini subgroup of $\ker_1$, etc. The 
action of $H$ extends to $\tilde P$ and to $\tilde P/\ker_k =P_k$, 
$k\ge 0$, in many ways \cite[Rem.~5.2]{BFr}. Denote the
characteristic quotients of
$\tilde P\xs H$ by  
$\{G_k=G_k(P_0\xs H)=P_k\xs H\}_{k=0}^\infty$. Define $M_k$ as 
$\ker_k/\ker_{k+1}$. This is a right $G_k$ module as follows: 
$g\in G_k$ maps $k\in \ker_k/\ker_{k+1}$  to $\hat 
g^{-1} k\hat g$ (right action), with $\hat g$ any lift of $g\to 
G_{k+1}$.

\begin{lem}  Let $U$ be the Frattini subgroup of $P_0$,
and let $V^*$ be the dual space to $P_0/U=V$. We characterize $G_0=P_0\xs H$
being $p$-perfect by the induced map of $H$ on $V^*$ has no
nontrivial $H$ invariant vector $v^*\in V^*$. \end{lem}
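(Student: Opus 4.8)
The plan is to translate ``$p$-perfect'' into the vanishing of an $H$-invariant functional and then dualize. \emph{First}, I would reduce any hypothetical $\bZ/p$ quotient to $P_0$: if $\psi\colon G_0\thd\bZ/p$ is a surjection, then $\psi|_H$ is a homomorphism from a $p'$ group to a $p$-group, hence trivial, so $H\subseteq\ker\psi$ and $\psi$ is determined by $\psi|_{P_0}$. Because $\bZ/p$ is abelian and $\psi(H)=1$, the condition that $\psi$ respect the relations of $G_0=P_0\xs H$ is exactly that $\psi(hxh^{-1})=\psi(x)$ for all $h\in H$, $x\in P_0$; that is, $\psi|_{P_0}$ is invariant for the conjugation action of $H$ on $P_0$. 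Conversely, an $H$-invariant homomorphism $\bar\psi\colon P_0\to\bZ/p$ extends to $G_0$ by $xh\mapsto\bar\psi(x)$; one checks multiplicativity by expanding $(x_1h_1)(x_2h_2)=x_1(h_1x_2h_1^{-1})h_1h_2$ and invoking $H$-invariance. So $G_0$ admits a $\bZ/p$ quotient iff $P_0$ admits a nonzero $H$-invariant homomorphism onto $\bZ/p$.

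\emph{Second}, I would pass to the Frattini quotient and dualize. Since $U=[P_0,P_0]P_0^p$ is the intersection of all index-$p$ (hence maximal, hence normal) subgroups of the $p$-group $P_0$, every homomorphism $P_0\to\bZ/p$ kills $U$ and so factors through $V=P_0/U$; as $V$ is an $\bF_p$-vector space this gives $\mathrm{Hom}(P_0,\bZ/p)=\mathrm{Hom}_{\bF_p}(V,\bF_p)=V^*$. Because $U$ is characteristic, the $H$-action on $P_0$ descends to $V$, and the identification above is then $H$-equivariant for the contragredient action on $V^*$. Under it, an $H$-invariant homomorphism corresponds to an $H$-fixed vector $v^*\in V^*$, nonzero iff the homomorphism is nontrivial. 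Chaining the two reductions: $G_0$ is $p$-perfect $\iff$ it has no $\bZ/p$ quotient $\iff$ $(V^*)^H=0$, which is the assertion.

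\emph{On the main obstacle.} There is essentially no deep obstacle; the content is entirely in setting up the right dictionary. The two spots that need a moment's care are (i) verifying that $xh\mapsto\bar\psi(x)$ is genuinely a homomorphism of the semidirect product --- the only place the hypothesis is really used --- and (ii) matching up ``constant on $H$-orbits in $V$'' with ``fixed vector in $V^*$'', which involves replacing $h$ by $h^{-1}$ but causes no trouble since $h$ ranges over all of $H$. For completeness one can also observe that every index-$p$ normal subgroup $N$ of $G_0$ satisfies $N=(N\cap P_0)H$ with $(N\cap P_0)/U$ an $H$-stable hyperplane of $V$ on whose quotient line $H$ acts trivially, so the correspondence misses no quotient and the stated equivalence is tight.
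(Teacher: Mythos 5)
Your proof is correct and follows the same route as the paper: both reduce a hypothetical $\bZ/p$-quotient of $G_0=P_0\xs H$ to an $H$-invariant homomorphism $P_0\to\bZ/p$ (via the same multiplicativity computation $(x_1h_1)(x_2h_2)=x_1(h_1x_2h_1^{-1})h_1h_2$), then identify such homomorphisms with $H$-fixed vectors in $V^*$ through the Frattini quotient. Yours simply writes out in full what the paper compresses into a few lines, and the concluding remark about index-$p$ normal subgroups is a correct (if unneeded) sanity check.
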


\begin{proof} Suppose   $v^*\in V^*\setminus \{0\}$  is $H$ invariant. Let $U'$ 
be the
pullback to $P$ of  $\ker(v^*)$. Consider $\psi: P_0\xs H\to \bZ/p$ by 
$hk\mapsto
k \mod U'$, for $h\in H$ and
$k\in P_0$. We check this is a homomorphism: $hkh'k'=hh'((h')^{-1}kh')k'$
and $(h')^{-1}kh' \mod U' \equiv k \mod U'$. Reverse the steps for the converse.
\end{proof} 

Let $K$ be a field. For any finite group $G$, the group ring 
$\Lambda=K[G]$ is an augmentation algebra from the ring homomorphism  
$\aug: \sum_i a_ig_i \mapsto \sum_i a_i$. The quotient vector space 
from $\aug$ is the identity module $\one_{G}=K$ for $\Lambda$ from 
$$\sum_i a_ig_i\alpha=(\sum_ia_i)\alpha \text{ for }\alpha\in 
\Lambda/\ker(\aug).$$ Also, maximal 2-sided ideals are in 
$\ker(\aug)$. So, the Jacobson radical $\rad(\bF_p[G])$, the 
intersection of he maximal 2-sided ideals of $\Lambda$, is also. When 
$G$ is a $p$-group, the Jacobson radical is exactly $\ker(\aug)$. 

\begin{defn}[Involution pairing]  Suppose 
$V = K^n$, and $h\in S_n$ has order 2 acting as permutations on the standard
basis vectors for $V$. Define an inner (dot) product pairing: 
$v_1=(x_1,...,x_n)$ and $v_2=(y_1,...,y_n)$ 
pair to $\lrang{v_1,v_2}_h = \sum_{i=1}^n x_i y_{(i)h}$. Then,  $\lrang{\cdot 
,\cdot }_h$ is symmetric.
\end{defn}

\begin{defn} Let $\Lambda$ be a commutative, associative algebra with unit
$1_\Lambda$. Suppose there is a nondegenerate inner product $\lrang{,}$ with
$\lrang{a b,c} = \lrang{a,b c}$, $a,b,c\in \Lambda$. We call $\Lambda $ a {\sl 
Frobenius\/}  algebra.
\end{defn}

\begin{exmp} With $K$ a field, let $\Lambda=K[G]$.  Elements of $G$ form a $K$ 
basis of $\Lambda$. 
We take
$h=h_\inv$ to permute $G$ by  $g \mapsto g^{-1}$. 
Then, $\lrang{a,b}$ is the coefficient of $1_G$ in $ab$, and $\lrang{a b,c}$  is 
the coefficient of $1_G$
in $abc$. So, associativity of multiplication in $K[G]$ makes $K[G]$  a 
Frobenius algebra.
\end{exmp}

For $\Lambda$ a symmetric Frobenius algebra, any left $\Lambda$ module is 
isomorphic to
its dual $\Lambda^* =\Hom_k(\Lambda,k)$, a right $\Lambda$ module.  Conclude: 
The 
projective and injective $\Lambda$ modules are the same. A {\sl primitive
idempotent\/} of $\Lambda$ is one we can't decompose as a nontrivial sum of
orthogonal idempotents. These correspond to projective
indecomposables. 

Let $M$ and $N$ be right $K[G]$ modules. The {\sl Hopf algebra\/} structure on 
$K[G]$ comes from $\sum_i
a_ig_i
\mapsto 
\sum_i a_ig_i\otimes g_i$. This gives $M\ot_{K} N$ a
$K[G]$ module structure. The projective indecomposables $P$
satisfy $\soc(P)\equiv P/\rad(P)$. So, the simple modules $G$ at
the top and bottom {\sl Loewy layers\/} (see \S\ref{radlayers}) of a projective 
indecomposable module are the
same. 

\subsection{Module theory for $(G_0,p)$ in the general case} Let $\tilde P$ be a 
pro-free pro-$p$
group with the same rank as the $p$ group $P_0$. Assume $G_0'=P_0\xs H$ is the 
$p$-split case. Then, 
$\tilde P\xs H$ is the universal $p$-Frattini cover of $G_0'$. Now we do the
nonsplit case. Every group
$G=G_0$ with
$p$ dividing 
$|G_0|$ has a (nontrivial) universal $p$-Frattini cover ${}_p\tilde {G_0}$. It 
is versal for covers of $G_0$
with
$p$-group kernel as is 
$\bZ_p\xs \{\pm 1\}$ versal for covers of $D_p$ with $p$-group kernel 
(\cite[Chap.~21]{FrJ}, \cite[Part
II]{FrMT} and \cite[\S3.3]{BFr}). 

It is harder, however, to describe ${}_p\tilde G_0$ for general $G_0$. Let $P_0$ 
be a $p$-Sylow
of
$G_0$. Use  $G_0'$ for the normalizer in $G_0$ of $P_0$. (Don't confuse $G_0'$ 
with the commutator of
$G_0$.) By Schur-Zassenhaus, 
$G_0'=P_0\xs H$ with $H$ a (maximal) $p'$-split quotient of $G_0'$. Let  
$\phi_{k, t} :
G_k\to G_t$, $t\le k$ be the natural map. We apply $'$ to modules and groups 
associated with the
universal
$p$-Frattini cover of $G_0'$ in
\S\ref{psplit}:
$$\{M_k'=\ker(G_{k+1}'\to G_k')=\ker(\phi_{k+1,k}')\}_{k=0}^\infty.$$ 

\subsubsection{Inducing from $G_0'$ to $G_0$}
Now remove the $'$, as in such notation as $\{M_k= 
\ker(\phi_{k+1,k})\}_{k=0}^\infty$, for groups and 
modules corresponding to $G_0$. We regard $\bF_p[G_0]$ as a left $\bF_p[G_0']$ 
module and as a right $\bF_p[G_0]$ module. The induced module 
$\ind_{G_0'}^{G_0}(M_0')$ is the $G_0$ module 
$$M_0'\ot_{\bF_p[G_0']}\bF_p[G_0]=M_0'\ot_{\bF_p} \bF_p[G_0/G_0'].$$ The 
notation $\bF_p[G_0/G_0']$ is for the right $G_0$ module written as the vector 
space generated by  
right cosets of $G_0'$ in $G_0$. Then,  
$\ind_{G_0'}^{G_0}(M_0')$ is a right $G_0$ module. 

Suppose $N$ is a right $G_0$ module.  Any $\bZ/p[G_0']$ homomorphism  $\psi: 
M\to N$ extends to a
$\bZ/p[G_0]$ module homomorphism $\Ind_{G_0'}^{G_0}(M)\to M$ by $m\ot g \mapsto 
\psi(m)^g$. Recall that
$M_0$ is an indecomposable $G_0$ module (\cite[p.~11,
Exec.~1]{Ben1} or 
\cite[Indecom.~Lem.~2.4]{FrKMTIG}). 
To characterize $M_0$ as the versal module for exponent $p$
extensions of $G_0$, we use this result \cite[Prop.~2.7]{FrMT}. 

\begin{prop} \label{1-dim} The cohomology group $H_2(G_0,M_0)$ has dimension one 
over $\bF_p$. The 2-cocycle for
the short exact sequence $$1\to M_0\to G_1\to G_0\to 1$$ represents a generator.  
We define any nontrivial 
$\alpha\in H_2(G_0,M_0)$ as 
$G_1$ up to an automorphism fixed on the
$G_0$ quotient and multiplying $M_0$ by a scalar. \end{prop}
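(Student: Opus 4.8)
\medskip
\noindent\textbf{Plan of proof.}
I would reduce the statement to a computation in the stable module category of $\bF_p[G_0]$, the crux being the identification of $M_0$ as a $\bF_p[G_0]$-module. Here $H^2(G_0,M_0)$ denotes the group of module extensions $1\to M_0\to E\to G_0\to 1$ inducing the given action, so that the displayed sequence $1\to M_0\to G_1\to G_0\to 1$ contributes a class $\alpha_1$ to it. The first observation is that $G_1=\tilde G_0/\ker_1$ is the \emph{universal} exponent-$p$ Frattini cover of $G_0$: for any Frattini extension $1\to N\to E\to G_0\to 1$ with $N$ elementary abelian, the universal property of $\tilde G_0$ gives $\tilde G_0\to E$ over $G_0$, whose restriction $\ker_0\to N$ kills $\Phi(\ker_0)=\ker_1$ and hence factors through $M_0$, producing $G_1\thd E$. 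Combined with the fact that $\ker_0$ is free pro-$p$ and $\tilde G_0$ is the minimal $p$-Frattini cover, Gruenberg's relation-module theory then supplies an exact sequence of $\bF_p[G_0]$-modules
$$0\to M_0\to Q\to I_{G_0}\to 0,$$
with $Q$ projective, $Q\to I_{G_0}$ a projective cover, and $I_{G_0}$ the augmentation ideal; minimality forces $M_0$ to carry no projective summand, so $M_0\cong\Omega^2_{\bF_p[G_0]}(\bF_p)$, the minimal second syzygy (Heller translate) of the trivial module. This identification is the step I expect to be the main obstacle: it is where the profinite structure of $\tilde G_0$ enters, and care is needed because $\tilde G_0$ is not itself pro-$p$ (so this is not literally a free pro-$p$ presentation of $G_0$); I would lean on \cite{FrJ} for the structure of $\ker_0$.

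Granting $M_0\cong\Omega^2(\bF_p)$, the dimension count is immediate. The group algebra $\bF_p[G_0]$ is a symmetric (Frobenius) algebra, so the syzygy $\Omega$ is a self-equivalence of the stable module category and
$$H^2(G_0,M_0)\cong\operatorname{Ext}^2_{\bF_p[G_0]}(\bF_p,\Omega^2\bF_p)\cong\underline{\operatorname{Hom}}_{\bF_p[G_0]}(\Omega^2\bF_p,\Omega^2\bF_p)\cong\underline{\operatorname{Hom}}_{\bF_p[G_0]}(\bF_p,\bF_p).$$
The last group equals $\widehat H^0(G_0,\bF_p)=\bF_p^{G_0}/\mathrm{N}(\bF_p)=\bF_p/(|G_0|\,\bF_p)=\bF_p$, since $p\mid|G_0|$ kills the norm map $\mathrm{N}$. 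Hence $\dim_{\bF_p}H^2(G_0,M_0)=1$. (Equivalently, the Lyndon--Hochschild--Serre sequence for $1\to\ker_0\to\tilde G_0\to G_0\to 1$ with coefficients in $M_0$, using $\operatorname{cd}_p(\tilde G_0)\le 1$, yields $0\to H^1(G_0,M_0)\to H^1(\tilde G_0,M_0)\to\operatorname{End}_{G_0}(M_0)\xrightarrow{\ \tau\ }H^2(G_0,M_0)\to 0$ with $\tau(\psi)=\psi_{*}\alpha_1$; that $\tau$ has one-dimensional image again comes down to $\underline{\operatorname{End}}_{\bF_p[G_0]}(M_0)=\bF_p$.)

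For the remaining assertions, note that $1\to M_0\to G_1\to G_0\to 1$ is a Frattini extension, as the inclusion $\ker_0\subseteq\Phi(\tilde G_0)$ descends to $M_0\subseteq\Phi(G_1)$, and $M_0\neq 1$ because $\tilde G_0\to G_0$ is a proper cover (as $p\mid|G_0|$). A splitting would produce a proper subgroup of $G_1$ mapping onto $G_0$, contradicting $M_0\subseteq\Phi(G_1)$; so $\alpha_1\in H^2(G_0,M_0)$ is nonzero, hence a generator of the one-dimensional space. Any nontrivial $\alpha\in H^2(G_0,M_0)$ is then $c\,\alpha_1$ for a unique $c\in\bF_p^{\times}$, and post-composing the chosen identification of the kernel with $M_0$ by multiplication by $c$---an automorphism of $M_0$ fixing the $G_0$-quotient---carries the extension of class $\alpha$ isomorphically onto $G_1$. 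This is precisely the normalization in the statement.
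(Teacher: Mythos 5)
The paper does not prove this proposition: it is cited verbatim from \cite[Prop.~2.7]{FrMT}, so there is no internal proof to compare against. Your reconstruction is nonetheless the standard route, and I believe it is essentially what \cite{FrMT} (and \cite{GriessFrat}, which the paper lists in its bibliography) does. A few remarks on the details.

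Your reduction of everything to $M_0\cong\Omega^2_{\bF_p[G_0]}(\one_{G_0})$ is the right pivot, and once that is in hand the Tate-cohomology calculation $\Ext^2(\one,\Omega^2\one)\cong\underline{\Hom}(\one,\one)=\hat H^0(G_0,\bF_p)=\bF_p/|G_0|\bF_p=\bF_p$ is clean and correct (it uses only $p\mid |G_0|$; the standing $p$-perfect, centerless hypotheses are not needed for this dimension count). The nonsplitting of $1\to M_0\to G_1\to G_0\to 1$ because it is Frattini, and the scalar normalization in the last paragraph, are both fine.

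The one place where your write-up leans on an assertion rather than an argument is exactly where you say it does: the passage from ``$\ker_0$ is free pro-$p$'' to the Gruenberg-type sequence $0\to M_0\to Q\to I_{G_0}\to 0$ with $Q$ a \emph{projective cover} of $I_{G_0}$. Gruenberg's relation sequence comes from a free presentation of $G_0$ itself, whereas here it is only the kernel $\ker_0\lhd\tG p$ that is free pro-$p$, and $\tG p$ is not pro-$p$ nor free. Bridging that gap is precisely the content of the Griess--Schmid theorem on the Frattini module (and of the minimality step that strips off the projective summand); it is not a formal consequence of Gruenberg. You acknowledge this (``the step I expect to be the main obstacle''), which is the honest thing to do, but as written this is a citation to outside theory, not a proof.

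Your parenthetical Lyndon--Hochschild--Serre alternative is stated a little loosely: the five-term sequence gives $\tau:\End_{G_0}(M_0)\thd H^2(G_0,M_0)$ with kernel the image of $H^1(\tG p,M_0)$, so concluding $\dim H^2=1$ from $\underline{\End}_{\bF_p[G_0]}(M_0)=\bF_p$ requires identifying that image with the ideal of endomorphisms factoring through a projective. That identification is true but needs its own argument; since this is offered only as an aside and your main line is the syzygy computation, it does not affect the correctness of the proposal.

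One small point of bookkeeping: the paper writes $H_2(G_0,M_0)$, but the content (2-cocycles, extension classes) plainly concerns $H^2(G_0,M_0)$; your use of $H^2$ is the intended reading.
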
 

\begin{rem}[Automorphisms of $M_0$] Suppose $G$  acts on a vector space $M$. 
Assume $L:M\to M$ is a
linear map (though our notation is multiplicative) commuting with $G$. Let   
$\psi\in H^2(G,M)$. So,
$\psi$ makes $G\times M$ into a group through this formula:
$$ (g_1,m_1)*(g_2,m_2)=(g_1g_2,m_1^{g_1}m_2\psi(g_1,g_2)),\ g_1,g_2\in G,\ 
m_1,m_2\in M.$$ By replacing
$\psi(g_1,g_2)$ by $L(\psi(g_1,g_2))\eqdef\psi_L(g_1,g_2)$ we preserve the 
cocycle condition.
Scalar multiplication by $u\in \bZ/p$  replaces $\psi(g_1,g_2)$ by 
$\psi(g_1,g_2)^u$. Even
when  $M_0$ is the characteristic $p$-Frattini module for $G_0$, there may be 
nontrivial  maps
$L$ (not given by scalar multiplication) that preserve the cocycle. This 
produces nontrivial
automorphisms of
$G_1$ that induce the identity on
$G_0$. Such an $L$ exists when $G_0=K_4$, the Klein 4-group, and $M_0$ has 
dimension 5 in \S \ref{VDs}. 
\end{rem} 

\subsubsection{Using the indecomposability of $M_k$ as a $\bZ/p[G_k]$ module} 
\label{shap} 
We use an observation on the
natural extension
$\phi_{1,0}: G_1\to G_0$.  

\begin{lem} \label{nonsplitp} The extension $\phi_{1,0}^{-1}(G_0')\to G_0'$ is 
not split. 
\end{lem}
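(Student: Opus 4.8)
The plan is to translate the splitting question for $\phi_{1,0}^{-1}(G_0')\to G_0'$ into group cohomology and then localize at the prime $p$. First I would invoke Prop.~\ref{1-dim}: the short exact sequence $1\to M_0\to G_1\to G_0\to 1$ represents a generator $\alpha$ of the one-dimensional $\bF_p$-space $H^2(G_0,M_0)$, so in particular $\alpha\neq 0$. Pulling this sequence back along the inclusion $G_0'\hookrightarrow G_0$ produces exactly the extension
\[
1\to M_0\to \phi_{1,0}^{-1}(G_0')\to G_0'\to 1 ,
\]
whose class is therefore $\mathrm{res}^{G_0}_{G_0'}(\alpha)\in H^2(G_0',M_0)$. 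Hence the lemma is equivalent to the assertion $\mathrm{res}^{G_0}_{G_0'}(\alpha)\neq 0$.

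Next I would exploit that $G_0'$ is the normalizer in $G_0$ of the $p$-Sylow $P_0$: thus $P_0\le G_0'$, and $P_0$ is also a $p$-Sylow of $G_0$. Since $M_0=\ker_0/\ker_1$ is elementary abelian, $H^2(G_0,M_0)$ is annihilated by $p$; the corestriction--restriction identity $\mathrm{cor}^{G_0}_{P_0}\circ\mathrm{res}^{G_0}_{P_0}=[G_0:P_0]\cdot\mathrm{id}$, with $[G_0:P_0]$ prime to $p$, then shows $\mathrm{res}^{G_0}_{P_0}$ is injective on $H^2(G_0,M_0)$. Factoring $\mathrm{res}^{G_0}_{P_0}=\mathrm{res}^{G_0'}_{P_0}\circ\mathrm{res}^{G_0}_{G_0'}$ forces $\mathrm{res}^{G_0}_{G_0'}(\alpha)\neq 0$, so the extension over $G_0'$ is nonsplit.

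I do not expect a genuine obstacle; the point of routing the argument this way is to avoid a tempting but invalid shortcut. One might hope to argue only from the Frattini property $M_0\le\Phi(G_1)$ --- a complement to $M_0$ inside $\phi_{1,0}^{-1}(G_0')$ would surject onto $G_0'$, which is impossible if $M_0\le\Phi\bigl(\phi_{1,0}^{-1}(G_0')\bigr)$. But $M_0\le\Phi(G_1)$ need not pass to the preimage of a proper subgroup of $G_0$ (a non-normal order-$p$ subgroup of an elementary abelian quotient can have a split preimage), so the Frattini input must be fed through a subgroup carrying the whole $p$-part of the cohomology, namely a $p$-Sylow of $G_0$ --- which $G_0'$ conveniently contains. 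This is also the sense in which indecomposability of $M_0$ is the natural theme of this subsection, though here the one-dimensionality of $H^2(G_0,M_0)$ from Prop.~\ref{1-dim} already suffices.
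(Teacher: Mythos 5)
Your proof is correct, but it takes a genuinely different route from the paper. The paper's proof is a direct lifting argument: invoke Thm.~\ref{pdivis}, pick any $g$ of order $p$ in a $p$-Sylow $P_0\le G_0'$, note that every lift of $g$ to $G_1$ must then have order $p^2$, and observe that a section of $\phi_{1,0}^{-1}(G_0')\to G_0'$ would hand you an order-$p$ lift of $g$ --- an immediate contradiction. Your argument instead works in $H^2$: identify the restricted extension's class with $\mathrm{res}^{G_0}_{G_0'}(\alpha)$, then rule out its vanishing by the corestriction--restriction identity through $P_0$, using that $H^2(G_0,M_0)$ is killed by $p$ while $[G_0:P_0]$ is prime to $p$. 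Both are sound, and both ultimately exploit the same structural fact --- that $G_0'$ carries the full $p$-local information because $P_0\le G_0'$. The paper's route is shorter because Thm.~\ref{pdivis} has already done the heavy lifting; yours is more self-contained given only Prop.~\ref{1-dim} (you need only $\alpha\ne 0$, not the ``every lift of a $p$-element has order multiplied by $p$'' statement), and it makes transparent the exact generality: the restriction of $\alpha$ remains nonzero on \emph{any} subgroup of $p'$ index, not just the Sylow normalizer. Your remark on why the naive Frattini-subgroup shortcut fails for preimages of non-normal subgroups, and why routing through a Sylow-containing subgroup repairs it, is well taken and matches the paper's tacit reliance on $P_0\le G_0'$.
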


\begin{proof} We apply Thm.~\ref{pdivis}. Then, any $g\in G_0$ having order $p$  
lifts to have order $p^2$ in
$G_1$. This holds if $g$ is in a $p$-Sylow $P_0$ of $G_0'$. This is a 
contradiction if  
if $\phi_{1,0}^{-1}(G_0')\to G_0'$ splits.  
\end{proof} 

\begin{prop} \label{propnonsplit} A natural extension 
$\Ind_{G_0'}^{G_0}(M_0')\to G^*
\mapright{{\alpha^*}} G_0$ induces a surjective $G_0$ module homomorphism 
$\psi: \Ind_{G_0'}^{G_0}(M_0')\to M_0$. This gives a
$G_0$ splitting
$M\oplus N$ of $\Ind_{G_0'}^{G_0}(M_0')$ that makes $G^*/N$ isomorphic to $ 
G_1$. So,
if 
$\Ind_{G_0'}^{G_0}(M_0')$ is indecomposable,  we see that $G^*$ is isomorphic to 
$G_1$ as a cover of $G_0$. 
\end{prop}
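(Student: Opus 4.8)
The plan is to construct the $G_0$-module homomorphism $\psi$ by functoriality and then check surjectivity. First I would start from the short exact sequence $1\to M_0'\to G_1'\to G_0'\to 1$ defining $M_0'$, and apply the induction functor $\Ind_{G_0'}^{G_0}$ together with the adjunction/extension property recalled just above: a $\bZ/p[G_0']$-map $M_0'\to N$ for any $G_0$-module $N$ extends uniquely to a $\bZ/p[G_0]$-map $\Ind_{G_0'}^{G_0}(M_0')\to N$ by $m\ot g\mapsto \psi(m)^g$. So the heart of the matter is to produce a canonical $\bZ/p[G_0']$-map $M_0'\to M_0$. This I would get from the universal (versal) property of $M_0$ in Proposition~\ref{1-dim}: the extension $1\to M_0\to G_1\to G_0\to 1$ is the versal exponent-$p$ central-Frattini extension of $G_0$, and restricting it along $G_0'\hookrightarrow G_0$ gives an exponent-$p$ extension $\phi_{1,0}^{-1}(G_0')\to G_0'$ with kernel $M_0$ (now viewed as a $G_0'$-module). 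By versality of $M_0'$ for $G_0'$, this is classified by a $G_0'$-map $M_0'\to M_0$, which after inducing up yields $\psi\colon\Ind_{G_0'}^{G_0}(M_0')\to M_0$, and correspondingly a map of extensions producing the $G^*$ in the statement with $\alpha^*$ over $G_0$.

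Next I would argue that $\psi$ is surjective. Since $M_0$ is indecomposable as a $\bZ/p[G_0]$-module (cited from \cite[p.~11, Exec.~1]{Ben1}, \cite[Indecom.~Lem.~2.4]{FrKMTIG}), it is in particular generated by its top Loewy layer, so it suffices to show the image of $\psi$ is not contained in $\rad(M_0)$, equivalently that the induced extension $G^*\to G_0$ does not split after reduction mod the Frattini level. This is exactly where Lemma~\ref{nonsplitp} enters: the restricted extension $\phi_{1,0}^{-1}(G_0')\to G_0'$ is nonsplit, so the classifying $G_0'$-map $M_0'\to M_0$ is nonzero, hence (by the indecomposability of $M_0$ and the fact that its socle equals a simple module appearing once) its induced-up image generates all of $M_0$; equivalently $\psi$ is surjective. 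Then $\ker(\psi)=N$ is a $G_0$-submodule complement-candidate, giving a direct sum decomposition $\Ind_{G_0'}^{G_0}(M_0')=M\oplus N$ with $M\cong M_0$, and $G^*/N\cong G_1$ as extensions of $G_0$ by $M_0$ because both represent the versal class $\alpha\in H_2(G_0,M_0)$ of Proposition~\ref{1-dim} (up to the scalar ambiguity, which I would absorb into the normalization of $\psi$).

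Finally, the indecomposability hypothesis in the last sentence closes the argument: if $\Ind_{G_0'}^{G_0}(M_0')$ is itself indecomposable, then the splitting $M\oplus N$ forces $N=0$, so $\psi$ is an isomorphism and $G^*\cong G_1$ as a cover of $G_0$. I would present this as: surjectivity of $\psi$ plus indecomposability of the source implies $\psi$ is an isomorphism, and an isomorphism on kernels inducing the identity on $G_0$ lifts to an isomorphism of the extension groups.

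The step I expect to be the main obstacle is pinning down that $\ker(\psi)$ is genuinely a $G_0$-module \emph{direct summand} rather than merely a submodule — i.e.\ that the surjection $\psi$ splits as $G_0$-modules. Here I would invoke that $\bF_p[G_0]$ is a symmetric Frobenius algebra (established in the preceding section), so projectives and injectives coincide and, more to the point, that $M_0$ — being the $p$-Frattini module, a submodule of a free module — has the lifting property needed, or alternatively that $\Ext^1_{G_0}(M_0,\ker\psi)$ vanishes for the relevant module because $M_0$ sits in the relative-projective cover coming from induction. Getting this self-contained, rather than hand-waving "by general module theory," is the delicate point; the rest is bookkeeping with the versal property and Lemma~\ref{nonsplitp}.
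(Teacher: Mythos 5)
Your overall scaffolding — Shapiro's lemma/adjunction to transport the class $\alpha'\in H^2(G_0',M_0')$ to $\alpha\in H^2(G_0,\Ind_{G_0'}^{G_0}(M_0'))$, versality of $M_0'$ and $M_0$, and Lemma~\ref{nonsplitp} to rule out splitting — matches the paper's opening moves. But you have a genuine gap precisely at the step you flag as "the delicate point," and the fix you gesture at would not work. The $p$-Frattini module $M_0$ is not projective (it is the kernel of a projective cover, not a projective itself), so appeals to "projectives $=$ injectives for a symmetric Frobenius algebra" or to $\Ext^1_{G_0}(M_0,\ker\psi)=0$ do not give you a splitting of the surjection $\psi\colon\Ind_{G_0'}^{G_0}(M_0')\thd M_0$; in fact there is no reason $\psi$ itself should split. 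The paper instead produces \emph{two} maps going in opposite directions — a $\beta\colon G_1\to G^*$ from the universal exponent-$p$ Frattini cover property of $G_1\to G_0$, and a $\gamma\colon G^*\to G_1$ from Prop.~\ref{1-dim} via the nonsplit restriction over $G_0'$ — and then applies \emph{Fitting's Lemma} to the resulting endomorphism $\mu=\beta\circ\gamma$ of $\Ind_{G_0'}^{G_0}(M_0')$: for large $t$, $\Ind_{G_0'}^{G_0}(M_0')=\ker(\mu^{(t)})\oplus\mathrm{im}(\mu^{(t)})$, and indecomposability of $M_0$ identifies the image summand with $M_0$. The decomposition comes from Fitting, not from any projectivity of $M_0$ or injectivity considerations; that is the idea your proposal is missing.

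A second, smaller gap: your surjectivity argument ("image not in $\rad(M_0)$, hence surjective since $M_0$ is generated by its top Loewy layer") implicitly assumes the head $M_0/\rad(M_0)$ is simple. Indecomposability of $M_0$ does not give that — the $p$-Frattini module can have a non-simple head. The paper avoids the issue by a group-theoretic argument: $\gamma\colon G^*\to G_1$ covers $G_0$, and $G_1\to G_0$ is a Frattini cover, so $\gamma$ is automatically surjective; surjectivity of the restriction to kernels follows. That is cleaner and does not rely on any structure of $M_0/\rad(M_0)$.
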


\begin{proof}  An element $\alpha'\in H^2(G_0',M_0')$ defines the extension 
$G_1'\to G_0'$. By Shapiro's
Lemma, $H^2(G_0',M_0')=H^2(G_0,
\Ind_{G_0'}^{G_0}(M_0'))$ \cite[p.~42]{Ben1}. So $\alpha'$ canonically defines  
$\alpha\in H^2(G_0,
\Ind_{G_0'}^{G_0}(M_0'))$. The extension
$\Ind_{G_0'}^{G_0}(M_0')\to G^*
\mapright{{\alpha^*}} G_0$ represents $\alpha$. Regard 
$\{\alpha(h,h')\}_{h,h'\in G_0'}$ as giving  a
multiplication on $G_0'\times M_0'$: $(h,m)*(h',m')=(hh',m^{h'}m'\alpha(h,h'))$. 
The associative law for
this multiplication is the cocycle condition. Extend it to 
$\Ind_{G_0'}^{G_0}(M_0')\times G_0$ using the
following rules involving right coset representatives $\row g n$ with 
$n=(G_0:G_0')$:   
\begin{edesc} \item $m^{g_i}=m\ot g_i$, $m\in M_0'$; and 
\item $\alpha(hg_i,h'g_{i'})=\alpha(h,h'')$ with $h''$ satisfying the equation 
$h''g_j=g_ih'g_i^{-1}$ for
some $j$. \end{edesc}

Since $G^*\to G_0$ is a cover with exponent $p$ kernel, there exists
$\beta: G_1\to G^*$ covering $G_0$. Let
$H^*=(\alpha^*)^{-1}(G_0')$ and
$H_1=\phi_{1,0}^{-1}(G_0')$. 
There exists a homomorphism $\gamma: G_1'\to H_1$ because
$G_1'$ is the universal exponent
$p$ Frattini cover of $G_0'$. Lemma \ref{nonsplitp} says $H_1\to G_0'$ does not 
split.
Further, the induced map $M_0'\to M_0$ extends to a map of
$G_0$ modules $\Ind_{G_0'}^{G_0}(M_0') \to M_0$ by $ m_i'\ot g_i \mapsto 
\gamma(m_i)^{g_i}$. 

So, the cocycle defining the
extension $G^*\to G_0$ maps to a cocycle defining an extension $\psi: 
G^\dagger\to G_0$ with
$\ker(\psi)=M_0$. Since the restriction of $\psi$ over $G_0'$ is nonsplit, 
Prop.~\ref{1-dim} says $\psi:
G^\dagger\to G_0$ identifies with the extension $G_1\to G_0$. This produces the   
corresponding homomorphism
of groups
$G^*\to G_1$. We also call this 
$\gamma$. As
$G_1\to G_0$ is a Frattini cover, $\gamma$ must be surjective. 

Consider $\beta\circ \gamma=\mu$ and let $\mu^{(t)}$ be its $t$th
iterate restricted to $\Ind_{G_0'}^{G_0}(M_0'))$. We apply Fitting's Lemma 
\cite[Lem.~1.4.4]{Ben1}. Conclude,  
for suitably high $t$,  
$\Ind_{G_0'}^{G_0}(M_0')$ decomposes as a $G_0$ module into a direct sum of the 
kernel and range of
$\mu^{(t)}$. The range  will be an indecomposable $G_0$ summand isomorphic to 
the
indecomposable module
$M_0$. If 
$\Ind_{G_0'}^{G_0}(M_0')$ is already indecomposable, it must be isomorphic to 
$M_0$. 
\end{proof}

\subsubsection{A difference between $M_0$ and $M_k$, $k\ge 1$} 
\label{changeLevel1} For $k\ge 1$,
the universal $p$-Frattini cover of $G_k$ is still $\tG p$. We see this from the 
characterization of $\tG p$
as the minimal $p$-projective cover of $G_k$ \cite[Prop.~20.33 and 
Prop.~20.47]{FrJ}. By its definition, $M_k$
is a $G_k$ module. So, for $k\ge 1$, it is also the 1st characteristic module 
for the $p$-Sylow of $G_k$. 

We designed Prop.~\ref{propnonsplit} to handle level 0 where there can be a
distinctive difference between $M_0(P_0)$ and $M_0(G_0)$. Ex.~\ref{A55} shows 
the most extreme case of this,
according to Prop.~\ref{propnonsplit}, with $\Ind_{G_0'}^{G_0}(M_0')=M_0$.  
There are several types of
intermediate situations by considering  groups
$G^\dag$, properly containing $N_{G_0}(G_0')$ with $M_0(P_0)$ a $G^\dag$ module 
extending the action of
$N_{P_0}(G_0)$. To assure $M_0(G^\dag)=M_0'$, we need a nontrivial $\alpha\in
H^2(G^\dag,M_0(P_0)$.  Since $(G^\dag:N_{P_0}(G_0)),p)=1$, restriction to the 
one-dimensional
$H^2(N_{P_0}(G_0),M_0(P_0))$ (Prop.~\ref{1-dim}) is injective. So, the extension 
for $\alpha$ gives
the group $G_1$ for $G^\dag$. 

An example of this is $G_0'=A_4\le G_0=A_5$ and $p=2$. This shows the other
extreme of Prop.~\ref{propnonsplit}: $M_0'=M_0(P_0)=M_0(G_0)$ 
(\cite[Prop.~5.4]{BFr} for a quick constructive
proof, or \cite[Prop.~2.9]{FrMT}). 

\begin{rem}[Extending Prop.~\ref{propnonsplit}] \label{extnonsplit} Suppose 
the situation above occurs with a nontrivial $\alpha$ for $G^\dag$. The argument 
for Prop.~\ref{propnonsplit}
applies by replacing $\Ind_{G_0'}^{G_0}(M_0')$ with $\Ind_{G^\dag}^{G_0}(M_0')$. 
We could 
have  a complicated sequence applying this process before confidently 
proclaiming the exact summand of
$\Ind_{G_0'}^{G_0}(M_0')$ giving $M_0(G_0)$. 

\end{rem}  

\subsection{Radical layers and the appearance of $\one_{G_k}$} \label{radlayers}
{\sl Radical layers\/} of $M_k$ are the $G_k$ module quotients
$\rad(\bZ/p[G_k])^jM_k/\rad(\bZ/p[G_k])^{j+1}M_k$, 
$j\ge 0$.  These radical layers gives a maximal sequence
of
$G_0$ submodule quotients, with each a direct sum of simple
$G_0$ modules. The  Loewy display captures this data. It includes arrows showing 
how 
simple modules from different layers form subquotient extensions. One example 
occurs
in Ex.~\ref{A55}.  
\cite[Cor.~5.7]{BFr} exploits one case for $A_5$ and $p=2$. We see it in its 
restriction to
$A_4$ and to $K_4\le A_4$ in \S\ref{VDs}.  

Appearances of $\one_{G_k}$, the trivial $G_k$ module, in the Loewy layers of 
$M_k$ usually effect 
the structure of the $k$th level of a Modular Tower. The appearance of
$\one_{G_k}$ at the  tail of $M_k$ interprets that $G_{k+1}$ has a center. 
Thm.~\ref{pdivis} 
includes one reason for the hypothesis that $G_0$ is $p$-perfect 
\cite[Prop.~3.21]{BFr}.

\begin{thm} \label{pdivis} Suppose $p$ divides the order of  $g\in G_k$.  Then, 
any lift
$\hat g
\in G_{k+1}$ has  order $p\cdot \ord(g)$. Assume $g\in G_k$ is a $p'$ element. A  
unique $p'$ conjugacy class
of $G_{k+1}$ lifts $g$. If $G_0$ is centerless and p-perfect, so is $G_k$ for 
all
$k$. \end{thm}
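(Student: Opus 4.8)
The plan is to prove the three assertions of Theorem~\ref{pdivis} more or less independently, with the first (order multiplication) doing most of the work and the other two following as consequences. First I would recall the defining property of the universal $p$-Frattini cover: $\ker(\tG p\to G_0)$ is a pro-free pro-$p$ group, and $G_{k+1}=\tG p/\ker_{k+1}$ where $\ker_{k+1}$ is the $(k{+}1)$st iterated Frattini subgroup of $\ker_0=\ker(\tG p\to G_0)$. The key structural fact is that $\ker_k/\ker_{k+1}=M_k$ is an elementary abelian $p$-group (a Frattini quotient of a pro-$p$ group), so $G_{k+1}\to G_k$ is a central-in-the-kernel extension with kernel $M_k$ of exponent $p$; more precisely $M_k$ is a $G_k$-module, hence the extension is a ``$p$-elementary'' Frattini extension. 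For the order statement, let $g\in G_k$ with $p\mid \ord(g)$ and let $\hat g\in G_{k+1}$ be any lift. Since $M_k$ has exponent $p$ and $\hat g^{\ord(g)}\in M_k$, certainly $\ord(\hat g)\mid p\cdot\ord(g)$ and $\ord(\hat g)\in\{\ord(g),\,p\cdot\ord(g)\}$. It remains to rule out $\ord(\hat g)=\ord(g)$. If that held, $\langle \hat g\rangle$ would be a complement to $M_k$ over the subgroup $\langle g\rangle$, i.e.\ the restricted extension $\phi_{k+1,k}^{-1}(\langle g\rangle)\to\langle g\rangle$ would split. Pulling this back through $\tG p\to G_{k}$, one gets a closed subgroup of $\tG p$ mapping isomorphically to a subgroup of $G_k$ whose order is divisible by $p$; but a pro-$p$ Frattini cover admits no such torsion-splitting — a nontrivial $p$-subgroup of $G_k$ cannot lift to an isomorphic $p$-subgroup of $\tG p$, since $\tG p\to G_k$ is a Frattini cover with pro-$p$ kernel and $\tG p$'s finite quotients would then not be ``minimal''. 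This is the heart of the matter, and I expect it to be \emph{the main obstacle}: one must phrase correctly why a Frattini cover with pro-$p$ kernel kills every torsion element of order divisible by $p$ (equivalently, why the $p$-Sylows of $\tG p$ are pro-free pro-$p$, hence torsion-free), and then descend this from $\tG p$ to the single extension $G_{k+1}\to G_k$. The cleanest route is: the $p$-Sylow of $\tG p$ is the pro-free pro-$p$ group $\tilde P$ (as in \S\ref{psplit}), which is torsion-free; any lift of $g$ of order divisible by $p$ into $\tG p$ would then have infinite order in a pro-$p$-by-$p'$ extension — impossible — so already in $G_{k+1}$ the order must strictly grow, and by the exponent-$p$ bound on $M_k$ it grows by exactly the factor $p$.

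Second, assume $g\in G_k$ is a $p'$-element. Existence of a lift of the same order: choose any lift $\hat g$; then $\hat g^{\,p^N}$ for $N$ large enough (so that $p^N\equiv 1\bmod \ord(g)$... more carefully, replace $\hat g$ by $\hat g^{\,m}$ where $m\equiv 1\bmod\ord(g)$ and $\ord(\hat g)\mid p^a\ord(g)\Rightarrow m\equiv 0 \bmod p^a$) produces a $p'$-element lifting $g$; this is the standard prime-to-$p$ section trick using that $M_k$ is a $p$-group. For uniqueness of the $p'$-conjugacy class: if $\hat g_1,\hat g_2$ are two $p'$-lifts of $g$, then $\hat g_2=\hat g_1 m$ for some $m\in M_k$, and conjugating $\hat g_1$ by elements of $M_k$ moves $\hat g_1$ through $\hat g_1\cdot m^{-1}(\hat g_1 m\hat g_1^{-1})=\hat g_1\cdot(m^{g}m^{-1})^{-1}$-type cosets; since $g$ acts on $M_k$ as an invertible $p'$-element, $m\mapsto m^{g}-m$ (additively) is surjective onto $M_k$ — indeed $g-1$ is invertible on $M_k$ because $1$ is not an eigenvalue of a $p'$-order operator that... no: one uses that $M_k=(g-1)M_k\oplus\mathrm{Fix}(g)$ and then kills the fixed part by a further $H^1$-vanishing / conjugacy argument — so all $p'$-lifts of $g$ are $M_k$-conjugate, hence $G_{k+1}$-conjugate. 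This gives the unique $p'$-conjugacy class of $G_{k+1}$ over the class of $g$.

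Third, the descent of ``centerless and $p$-perfect''. Centerless: suppose $Z(G_{k+1})\ne 1$ for some $k\ge 0$ minimal. Since $G_{k+1}\to G_k$ is a Frattini cover with $p$-group kernel, $Z(G_{k+1})$ maps into $G_k$ with image a central (or trivially-imaged) subgroup; because $G_k$ is centerless by minimality of $k$, $Z(G_{k+1})\subseteq M_k$, so $\one_{G_k}$ would appear at the tail (bottom Loewy layer) of $M_k$ — i.e.\ $M_k$ has a trivial quotient on which $G_k$ acts trivially and which is central in $G_{k+1}$. I would then invoke the characterization of $M_0$ in Prop.~\ref{1-dim} (versality for exponent-$p$ extensions) together with the $p$-perfectness of $G_0$: a trivial quotient of $M_k$ central in $G_{k+1}$ would furnish a central extension of $G_k$ by $\bZ/p$ inside the $p$-Frattini tower, contradicting that $\tG p\to G_k$ is the \emph{minimal} $p$-projective cover (any such central $\bZ/p$ would be a non-Frattini or redundant generator of the kernel). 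For $p$-perfect: $G_{k+1}$ has a $\bZ/p$ quotient iff $M_k$ has a trivial \emph{top} Loewy quotient whose extension by $G_k$ splits compatibly; but $H_1(G_k,\bF_p)=G_k^{\mathrm{ab}}\otimes\bF_p=0$ since $G_k$ is $p$-perfect (inductively), and $M_k$ being a Frattini quotient contributes no new $\bF_p$-quotient of $G_{k+1}^{\mathrm{ab}}$ — so $G_{k+1}^{\mathrm{ab}}\otimes\bF_p=0$, i.e.\ $G_{k+1}$ is $p$-perfect. The induction on $k$ then closes. The one subtlety to handle with care here is distinguishing ``$\one_{G_k}$ in $M_k$'' (which can happen) from ``$\one_{G_{k}}$ at the tail, giving a center'' versus ``at the head, giving a $\bZ/p$ quotient'' — the cited \cite[Prop.~3.21]{BFr} is exactly the input that controls this, and I would lean on it rather than re-deriving it.
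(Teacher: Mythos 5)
First, a contextual note: the paper does not prove Theorem~\ref{pdivis} itself; it imports it from \cite[Prop.~3.21]{BFr} (the sentence just above the theorem statement makes this explicit). So there is no ``paper's own proof'' to compare against, and the right question is whether your sketch would actually close without leaning on that reference.

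On the first assertion (order growth), there is a genuine gap. You reduce to: if $\hat g\in G_{k+1}$ lifts $g$ with $\ord(\hat g)=\ord(g)$, then the restricted extension $\phi_{k+1,k}^{-1}(\langle g\rangle)\to\langle g\rangle$ splits, and you then ``pull this back'' to a torsion lift in $\tG p$ to contradict torsion-freeness of the pro-free pro-$p$ Sylow. That pull-back does not exist: a splitting over $\langle g\rangle$ at a single level does not propagate up the tower, because the order of a lift could perfectly well stay put from level $k$ to $k{+}1$ and then jump at $k{+}2$. Torsion-freeness of $\tG p$ plus a K\"onig-type compactness argument would at best show that for each $g$ the order eventually grows at \emph{some} level; it does not yield that \emph{every} lift grows at \emph{every} level, which is what is asserted. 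What is actually needed is the single-extension module-theoretic input (from Griess--Schmid~\cite{GriessFrat}, and packaged in \cite[Prop.~3.21]{BFr}): the extension class of $1\to M_k\to G_{k+1}\to G_k\to 1$ in $H^2(G_k,M_k)$ restricts to a \emph{nonzero} class in $H^2(C,M_k)$ for every cyclic subgroup $C\le G_k$ of order divisible by $p$; equivalently, $M_k$ restricted to such a $C$ has no complement to the cyclic extension datum. That is the fact your reduction quietly presupposes and does not supply.

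On the third assertion, your ``centerless'' inductive step is also not right as written. You claim that a trivial quotient of $M_k$ giving a central $\mathbb{Z}/p$ in $G_{k+1}$ would ``contradict that $\tG p\to G_k$ is the minimal $p$-projective cover.'' It does not: a central $\mathbb{Z}/p$ in the kernel is entirely compatible with being a Frattini cover (compare $\mathbb{Z}/4\to\mathbb{Z}/2$), and in fact the paper devotes Section~\ref{typeschur} to exactly the situation where $\one_{G_k}$ occurs in $M_k$. The correct statement, again from \cite[Prop.~3.21]{BFr}, is that for $G_k$ centerless and $p$-perfect the trivial module does not occur in the \emph{socle} of $M_k$, whence $Z(G_{k+1})\subseteq M_k$ forces $Z(G_{k+1})=1$; the hypothesis ``centerless'' enters essentially and your sketch never uses it. Your $p$-perfect step is fine once phrased via the Frattini property: any $\mathbb{Z}/p$-quotient of $G_{k+1}$ must factor through $G_k$ because $\ker\phi_{k+1,k}\subseteq\Phi(G_{k+1})$. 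Your second assertion ($p'$-lifting and uniqueness of the $p'$-class) is essentially right; it is cleaner to say that $\langle\hat g_1\rangle$ and $\langle\hat g_2\rangle$ are Hall $p'$-subgroups of $\phi_{k+1,k}^{-1}(\langle g\rangle)$ and are therefore conjugate by Schur--Zassenhaus, rather than the $(g-1)$-invertibility route you half-attempt.
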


\begin{exmp}[$A_5$ and $p= 5$] \label{A55}  The normalizer of
$P_0$ in
$A_5$ is a dihedral group $G_0'$ and $M_0'$ is $\bZ/5$. From 
Prop.~\ref{propnonsplit}, 
$M_0$ is  an indecomposable component of the rank $(G_0:G_0')=6$ module 
$\ind_{G_0'}^{G_0}M_0'$. There is  an
obvious 5-Frattini cover $$\phi': \PSL_2(\bZ/5^2) \to \PSL_2(\bZ/5)=A_5.$$ The 
kernel of $\phi'$ is the adjoint
representation $U_3$ for $\PSL_2(\bZ/5)$. The rank of $M_0$ determines the rank 
(minimal number of generators)
of
$\ker_0$ as a pro-free pro-5 group. Conclude, if the rank of
$M_0$ is 3, then $\phi:\PSL_2(\bZ_5)\to\PSL_2(\bZ/5)$ would be the universal 5-
Frattini cover of $A_5$.
The kernel, however, of $\phi$ is not a pro-free group. Since there is no rank 2 
simple
$\bZ/5[A_5]$ module, the Loewy display for $M_0$ either has three copies of the 
trivial representation
in it, it is $U_3\oplus U_3$, or it is $U_3 \to U_3$. The first fails 
Thm.~\ref{pdivis} for then 
$G_1$ would have a nontrivial center. The second fails indecomposability of 
$M_0$. So, the last of these
gives $M_0$. 
\end{exmp} 

\section{Heuristics for the Main Conjecture when $r=4$} \label{MainConjTruth} 
Assume $r=4$, $G=G_0$ is $p$-perfect and $\bfC$ is a collection of
$p'$ conjugacy classes of $G_0$ for which $\ni(G_k,\bfC)$ is nonempty
for each integer $k\ge 0$. We given an intuitive justification why each 
component
of $\sH(G_k,\bfC)^\rd$ has large genus if $k$ is large.

\subsection{The setup} \cite[\S 9.6]{BFr} assumes we have no 
control over level 0 component genuses. So, a test for genus growth starts  
with assuming level 0 has a genus 0 component. Let
$\bar O_k$ be a
$\bar M_4$ orbit on $\ni(G_k,\bfC)^\rd$  at level
$k\ge 0$.  Assume its corresponding component has genus 0.  Suppose $\bar 
O_{k+1}$ is a $\bar M_4$ orbit on
$\ni(G_{k+1},\bfC)^\rd$ lying above it.  Let $\row O t$ be the $\gamma_\infty$ 
orbits on $\bar O_k$.

If a level $k$ component has genus 1, then it is usually easy to find evidence 
of some
ramification from $\bar O_k$ to $\bar O_{k+1}$. This assures the genus of $\bar 
O_{k+1}$ is at least 2, and the
genus rises for orbits at higher levels over $\bar O_{k+1}$  automatically. So,
we give simple reasons why we expect the rise of the genus follows from 
\eqref{compgenus}. Our setup is to compute the genus of a component 
(corresponding to a $\bar M_4$ orbit) $\bar
O_{k+1}$ at level $k+1$ lying over a component $\bar O_k$ at level $k$, assuming 
$\bar O_k$ has
genus 0.  

\subsection{Elliptic ramification and orbit shortening} \label{ellipram} We 
denote the component corresponding
to
$\bar O_{k+1}$ by
$\sH^\rd_{\bar O_{k+1}}$. Each 
$\bp_{k+1}\in \sH_{\bar O_{k+1}}$ represents a cover $$\phi_{\bp_{k+1}}: 
X_{\bp_{k+1}} \to \prP^1_z.$$
Suppose
$\bp_{k+1}$ over 0 or 1 on $\prP^1_j$ actually ramifies over its  image
$\bp_k\in
\sH^\rd_{\bar O_{k+1}}$. So, it contributes 2 (resp.~1) to  $\ind(\gamma_0)$
(resp.~$\ind(\gamma_1)$) on the right of \eql{compgenus}{compgenusc} (when $\bar 
O=\bar O_{k+1}$). Ramification
from $\bp_k$ to $\bp_{k+1}$  implies that in going from $0\in
\prP^1_j$   (or 1) up to
$\bp_k$ there is no ramification. That is, there is a nontrivial element 
$\alpha\in
\PGL_2(\bC)$ and another cover $\phi_{\bp'_{k+1}}: X_{\bp'_{k+1}} \to \prP^1_z$ 
with the following
properties. 
\begin{edesc} \label{ramstops} \item $\phi_{\bp'_{k+1}}$ lies over 
$\phi_{\bp'_{k}}: X_{\bp'_{k}} \to
\prP^1_z$.
\item There is an isomorphism $\mu_k: X_{\bp_{k}} \to X_{\bp'_{k}}$ with 
$\phi_{\bp'_{k}}\circ \mu_k = \alpha\circ \phi_{\bp_{k}}$. 
\item There is {\sl no\/} such isomorphism $\mu_{k+1}: X_{\bp_{k+1}} \to 
X_{\bp'_{k+1}}$ lying over
$\mu_k$. 
\end{edesc}  

We refer to  \eqref{ramstops} as saying there is {\sl elliptic ramification\/} 
from
$\bar O_{k}$ to $\bar O_{k+1}$. If each $\bar M_4$ orbit $\bar O_{k+1}$ over 
$\bar O_{k}$ has no
points $\bp_k$ giving elliptic ramification, we say $\bar O_{k}$ has no elliptic 
ramification.  This is
exactly what happens in inner space examples from  
\cite[\S 8.1.1]{BFr} and with modular curves at level 0. 

\begin{guess} \label{noelliptic}  We assume the Main Conjecture hypotheses and 
that $k$ is large. Then, there
is no elliptic ramification above any component at level $k$. \end{guess}

The last sentence of 
Thm.~\ref{pdivis} assures  $G_k$ has no center. In turn this is equivalent to 
inner Hurwitz
space covers having  fine moduli. We test for reduced (inner) Hurwitz spaces to 
have fine moduli in 
two steps \cite[Prop.~4.7]{BFr}. Denote the pullback of $\bar O_k$ to 
$G\backslash
\ni(G_k,\bfC)$ by $\bar O_k^*$. Then, 
$\sH^\rd_{\bar O_k}$ has {\sl b-fine moduli\/} (fine moduli off the fibers over
$j=0$ and 1) if and only if  $\sQ''$ (\S\ref{shincidence}) is faithful on  $\bar 
O_k^*$. Given b-fine
moduli, then 
$\sH^\rd_{\bar O_k}$ has fine moduli if and only if neither $\gamma_0$ nor 
$\gamma_1$ has fixed points
on
$\bar O_k$. If $\sH^\rd_{\bar O_k}$  has b-fine (resp.~fine) moduli, then so 
does $\sH^\rd_{\bar O_{k+1}}$. 

Orbit shortening is another phenomenon that affects the Riemann-Hurwitz formula 
for computing the
genus of $\sH^\rd_{\bar O_k}$ components. \cite[Lem.~8.2]{BFr} explains orbit 
shortening as reducing the
length of
$q_2$ orbits in $\bar O_k^*$ to their image $\gamma_\infty$ orbits in $\bar 
O_k$. Though this is
significant for precise data about cusps, for it too there is a natural working 
hypothesis. 

\begin{guess} \label{noshortening} For $k$ large, there is no orbit shortening 
above $\sH^\rd_{\bar O_k}$.
\end{guess}
 
\subsection{$p$-growth of cusps} \label{pgrowth}  

We differentiate between two types of $\gamma_\infty$ orbits.

\subsubsection{$p$-divisible cusps} For $\bg=(g_1,g_2,g_3,g_4) \in O_i$, denote 
$|\lrang{g_2g_3}|$ by
$\mpr(\bg)$, the {\sl middle product\/} of $\bg$. Call 
$O_i$ (or $\bg$) {\sl
$p$-divisible\/}  if $p| \mpr(\bg)$.   Suppose $\bg' \in \bar O_{k+1}$
lies above $\bg$: $\bg' \mod M_k = \bg$. Express this with the notation 
$\bg'/\bg\in \bar O_{k+1}$. 
The $\bar M_4$ action guarantees the number of elements of $\bar O_{k+1}$ lying 
over
$\bg\in
\bar O_k$ depends only on $\bar O_k$. Denote $|\bar O_{k+1}|/|\bar O_k|$, the 
{\sl degree\/} of $\bar
O_{k+1}$ over $\bar O_k$,  by 
$[\bar O_{k+1},\bar O_k]$. 

Choose
one representative ${}_i\bg$ in each $O_i$,
$i=1,\dots, t$. Order the orbits so the first $t'$ of these, $\row O {t'}$, are 
the $p$-divisible
cusps. For $i\le t'$ and $\bg'/ {}_i\bg\in
\bar O_{k+1}$, Thm.~\ref{pdivis} implies $\mpr(\bg')=p\cdot \mpr(\bg_i)$. For 
$i\ge t'+1$, let 
$U_i$ be the number of
$p$-divisible $\bg'/ {}_i\bg\in
\bar O_{k+1}$. 

Assume the genus of $\bar O_k$ is 0 and the conclusion of
Conj.~\ref{noshortening} holds for $\bar O_k$: No orbits shorten from $\bar O_k$ 
to $\bar
O_{k+1}$. Then, the following holds 
\cite[Lem.~8.2]{BFr}: 
\begin{equation} \label{genbound} g_{\bar O_{k+1}} \ge \Bigl(\frac{(p-1)}{2p}t'-
1\Bigr)[\bar O_{k+1},\bar
O_k]+1 + \frac{(p-1)}2\sum_{t'+1\le i
\le t} U_i.\end{equation}  

The next proposition is essentially in \cite[Lem.~8.2]{BFr}. 
\begin{prop} \label{goup} To the previous assumptions add that the conclusion of
Conj.~\ref{noelliptic} holds for $\bar O_k$: There is no elliptic ramification 
from $\bar O_k$ to $\bar
O_{k+1}$. Then, $g_{\bar O_{k+1}}$ equals the right side of \eqref{genbound}. 
\end{prop}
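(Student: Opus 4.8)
The plan is to start from the genus formula \eql{compgenus}{compgenusc} applied to $\bar O = \bar O_{k+1}$, namely
\[
2(|\bar O_{k+1}| + g_{\bar O_{k+1}} - 1) = \ind(\gamma_0) + \ind(\gamma_1) + \ind(\gamma_\infty),
\]
and to evaluate each of the three index terms on the right under the standing hypotheses: $g_{\bar O_k} = 0$, no orbit shortening from level $k$ to $k+1$ (Conj.~\ref{noshortening}), and no elliptic ramification from level $k$ to $k+1$ (Conj.~\ref{noelliptic}). The inequality \eqref{genbound} already packages the effect of the $\gamma_\infty$ term together with the genus-0 assumption on $\bar O_k$; the content of the proposition is that, once elliptic ramification is ruled out, this inequality is an equality. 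So the real task is to argue that $\ind(\gamma_0)$ and $\ind(\gamma_1)$ on $\bar O_{k+1}$ are exactly the pullbacks of the corresponding indices on $\bar O_k$ (which vanish, or are accounted for, because $g_{\bar O_k}=0$), contributing nothing extra.

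First I would recall the definition of elliptic ramification in \eqref{ramstops}: a point $\bp_{k+1}$ over $j=0$ or $j=1$ contributes to $\ind(\gamma_0)$ or $\ind(\gamma_1)$ on $\bar O_{k+1}$ beyond what it contributes on $\bar O_k$ precisely when the cover $X_{\bp_{k+1}}\to \prP^1_z$ fails to descend compatibly with an isomorphism $\mu_k$ downstairs — that is, precisely when there is a genuine new ramification point of the map $\sH^\rd_{\bar O_{k+1}} \to \sH^\rd_{\bar O_k}$ lying over $j = 0$ or $j = 1$. The hypothesis of Conj.~\ref{noelliptic} (assumed in the proposition) says no such point exists above $\bar O_k$. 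Hence every fixed point of $\gamma_0$ (resp.~$\gamma_1$) on $\bar O_{k+1}$ lies over a fixed point of $\gamma_0$ (resp.~$\gamma_1$) on $\bar O_k$, with the full ramification index already present downstairs; since $g_{\bar O_k} = 0$, the quantities $\ind(\gamma_0)$ and $\ind(\gamma_1)$ on $\bar O_{k+1}$ are determined entirely by $[\bar O_{k+1},\bar O_k]$ times their values on $\bar O_k$, and these are exactly the amounts already subtracted in deriving \eqref{genbound}. Concretely, I would track the derivation of \eqref{genbound} in \cite[Lem.~8.2]{BFr} and observe that the only place an inequality (rather than an equality) entered was the estimate on $\ind(\gamma_0)+\ind(\gamma_1)$ coming from the possibility of elliptic ramification; removing that possibility collapses the inequality.

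The last step is bookkeeping: with no orbit shortening, the $\gamma_\infty$-orbit structure on $\bar O_{k+1}$ is exactly the pullback of that on $\bar O_k$ with the $p$-divisible cusps $\row O {t'}$ each having width multiplied by $p$ (by Thm.~\ref{pdivis}, $\mpr(\bg') = p\cdot\mpr({}_i\bg)$) and the non-$p$-divisible ones contributing the $U_i$ correction; this is exactly the computation of $\ind(\gamma_\infty)$ that produces the right-hand side of \eqref{genbound}. Plugging the now-exact values of all three indices into \eql{compgenus}{compgenusc} and solving for $g_{\bar O_{k+1}}$ yields the asserted equality. The main obstacle I anticipate is not conceptual but careful enough handling of the $j=0,1$ fibers: one must be sure that ``no elliptic ramification'' genuinely forces the relevant fixed-point counts of $\gamma_0,\gamma_1$ on $\bar O_{k+1}$ to be $[\bar O_{k+1},\bar O_k]$ times those on $\bar O_k$ — i.e.~that a $\gamma_0$- or $\gamma_1$-fixed point cannot appear ``for free'' upstairs without being elliptic ramification in the sense of \eqref{ramstops} — and that the b-fine/fine moduli dichotomy (inherited upward by the remark following Conj.~\ref{noshortening}) does not introduce a subtlety in the index count over $j=0,1$. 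Once that identification is pinned down, the rest is the Riemann–Hurwitz bookkeeping already done in \cite[Lem.~8.2]{BFr}.
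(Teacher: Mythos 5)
The paper supplies no proof of Prop.~\ref{goup}: the preceding sentence just says it is ``essentially in \cite[Lem.~8.2]{BFr}''. Your strategy --- that the inequality in \eqref{genbound} becomes an equality once the extra positive contribution from elliptic ramification is removed, with the $\gamma_\infty$ data already pinned down exactly by the no-orbit-shortening hypothesis together with Thm.~\ref{pdivis} --- is exactly what the paper intends, and your self-contained $j$-line Riemann--Hurwitz sketch is sound. Two small slips are worth tidying. First, the fact that $\ind(\gamma_0)$ and $\ind(\gamma_1)$ on $\bar O_{k+1}$ equal $[\bar O_{k+1},\bar O_k]$ times their values on $\bar O_k$ is a consequence \emph{solely} of the absence of elliptic ramification; the hypothesis $g_{\bar O_k}=0$ plays no role there, and is used only afterward, when the level-$k$ identity $2(|\bar O_k|-1)=\ind(\gamma_0)+\ind(\gamma_1)+\ind(\gamma_\infty)$ is fed into the level-$(k+1)$ formula \eql{compgenus}{compgenusc} to eliminate the base-level indices. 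Second, ``every fixed point of $\gamma_0$ on $\bar O_{k+1}$ lies over a fixed point on $\bar O_k$'' is the implication that holds automatically for any covering; the substantive content of no elliptic ramification is the converse --- that each $\gamma_0$-fixed point of $\bar O_k$ has \emph{only} $\gamma_0$-fixed points above it (and likewise for $\gamma_1$) --- and it is this direction that forces the fixed-point counts, and hence the indices, to scale exactly by the degree.
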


\begin{exmp}[Situations where the genus rises] We keep the assumptions of 
Prop.~\ref{goup}. Then, the
genus rises if $t'> 2p/(p-1)$. If $t'=0$, expression \eqref{genbound} requires 
that some of the $U_i\,$s
are positive. So, we have $p$-divisible cusps on $\bar O_{k+1}$. At the next 
level that
forces $t'>0$. Notice with these assumptions that $[\bar O_{k+1},\bar
O_k]\ge p$, for there must be some ramification. There are a few boundary cases 
of concern, like $p=2$, 
$t'=4$ and $[\bar O_{k+1},\bar O_k]=2$.   
\end{exmp}

\section{Types of Schur multipliers} Continue notation from \S\ref{radlayers}. 
Suppose 
the first radical layer of 
$M_k$ contains $\one_{G_k}$. This means $G_k$ has a nontrivial $p$ part to its 
Schur multiplier.
Several  mysterious events can occur from this. 
\begin{edesc} \label{useSchur} \item  \label{useSchura} A $\bar M_r$ orbit $\bar 
O_k\!\subset\!
\ni(G_k,\bfC)^\rd$ may have nothing over it in
$\ni(G_{k+1},\bfC)^\rd$. \item  \label{useSchurb} Suppose $O$  is a 
$\gamma_\infty$ orbit in $\bar O_k$ that
is not
$p$-divisible. Still, it may have all
$\gamma_\infty$ orbits in $\ni(G_{k+1},\bfC)$ above it $p$-divisible.
\end{edesc} 
\cite[\S 9.3]{BFr} illustrates \eql{useSchur}{useSchura}, while \cite[\S 5.4--
5.5]{BFr} illustrates
\eql{useSchur}{useSchurb}. 

\subsection{Setting up for appearance of Schur multipliers} 
\label{schurMultStart}
Often we expect the
genus to go up dramatically with the level, even from level 0 to level 1, even 
if there are components of genus
0 at level 0. Example:
\cite[Cor.~8.3]{BFr} computes in the
$(A_5,\bfC_{3^4}, p=2)$ Modular Tower the genuses (12 and 9) of the two level 1 
components over the
one level 0 component of genus 0. The nontrivial Schur
multiplier of $G_0$  gives many
$p$-divisible cusps at level 1, though there are none at level 0. From the 
nontrivial Schur multiplier of
$G_1$ there comes a complete explanation of the two very different components at 
level 1. For example,
the genus 9 component has no component above it at level 2.  
We now show how to go beyond the spin cover situations that gave previous 
progress.    

Let $D$ be a $G_{k+1}$
submodule  of $M_{k+1}$ with    
\begin{triv} $M_{k+1}/D$ the trivial (1-dimensional) $G_k$ module $\one_{G_k}$. 
\end{triv} 
Assuming $G_0$ is $p$-perfect assures there is a {\sl unique\/} (up to
isomorphism) cover
$R_D=G_{k+2}/D \to G_k$ factoring through $G_{k+1}$ as a central extension 
having kernel $\bZ/p$
\cite[\S 3.6.1]{BFr}. Often we identify $D$ with the particular $\bZ/p$ quotient 
of $M_{k+1}$. We refer to $D$
or $R_D$ as a {\sl Schur quotient\/} at level $k$. 

\cite{BFr} sometimes uses computations in the characteristic
modules
$M_k$ in additive notation. That won't work in \S\ref{typeschur}; we consider 
these as subgroups
of $G_{k+1}$ and quotients of subgroups of $R_D$.

\subsection{Little $p$ central extensions} \label{typeschur} Use the previous 
notation for the characteristic
sequence
$\{M_k\}_{k=0}^\infty$ of $p$-Frattini modules. Let $\{P_k\}_{k=0}^\infty$ be a 
projective system of $p$-Sylows
of the corresponding groups 
$\{G_k\}_{k=0}^\infty$.
\S\ref{changeLevel1} noted that
$$M_0(P_k)=M_k=\ker(P_{k+1}\to P_k) \text{ for }k\ge 1,$$ though this may not 
hold for $k= 0$. Let
$D$ be an index
$p$ subgroup of the 1st Loewy layer of 
$M_{k+1}$ and let $R_D$ be the corresponding Schur quotient for $\tG p$:
$$\ker(R_D
\to G_{k+1})=\bZ/p \text{ with trivial $G_{k+1}$ action}.$$ Let $V^0_D \subset 
M_k$ be those nonidentity
elements of
$M_k$ with order
$p$ (rather than order $p^2$) lift to
$R_D$. Use $V_D$ to be $V_D^0$ augmented by the identity element of $M_k$ (its
lifts have order 1 or $p$). 

Denote the map $R_D\to G_{k}$ by $\phi_D$.  Use the notation $\hat M_D$
(resp.~$\hat V_D$) for the pullback of $M_k$ (resp.~$V_D$) in $R_D$. We will see 
that sometimes 
$V_D$ is not a group. Still, the notation is valid. Use 
$\hat m$ for some lift of  $m\in M_k$ to $\hat M_D$. Since
$\hat M_D$ is a central extension of $M_k$, $\hat m^p$ depends only on $m$, and 
not on $\hat m$. 
We 
display the context for groups of order $p^3$ appearing in our calculations.

\begin{itemize} \item $\bZ/p^2\xs \bZ/p=U_p$ with a generator of the right copy 
of $\bZ/p$ mapping 
$1\in \bZ/p^2$ to $1+ap$ for some $a$ prime to $p$. 
\item $(\bZ/p)^2\xs \bZ/p=W_p$ with a generator of the right $\bZ/p$ acting as 
the matrix 
$\smatrix 1 1 0 1$.   
\item $H_p$ is the small Heisenberg group : $2\times 2$ unipotent upper 
triangular matrices with every 
element of order $p$.

\end{itemize}  

\begin{lem} The group $H_p$ ($p>2$) is isomorphic to $W_p$. \end{lem}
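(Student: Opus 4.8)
The plan is to exhibit an explicit isomorphism, using the structural descriptions already on the table: $H_p$ is the group of $3\times 3$ unipotent upper triangular matrices over $\bF_p$ with every nonidentity element of order $p$ (true precisely when $p>2$), and $W_p=(\bZ/p)^2\xs \bZ/p$ with the right factor acting through $\smatrix 1 1 0 1$ on the standard basis of $(\bZ/p)^2$. Both are nonabelian groups of order $p^3$. First I would recall the elementary classification of groups of order $p^3$ for $p$ odd: there are exactly two nonabelian ones, distinguished by their exponent, namely the exponent-$p$ group and the exponent-$p^2$ group ($U_p$ in the paper's notation). Since $H_p$ has exponent $p$ by definition, and $W_p$ visibly has exponent $p$ as well — its abelian normal subgroup $(\bZ/p)^2$ has exponent $p$, the acting generator has order $p$, and one checks directly that for $w=(v,t)\in W_p$ with $t\neq 0$ one has $w^p = (v + (1+A+\cdots+A^{p-1})\text{-type sum}, 0)$ where $A=\smatrix 1 1 0 1$, and $1+A+\cdots+A^{p-1}=\smatrix p \binom{p}{2} 0 p \equiv \smatrix 0 0 0 0 \pmod p$ because $p\mid\binom p2$ for $p$ odd — it follows that $W_p$ is the exponent-$p$ nonabelian group, hence $H_p\cong W_p$.

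Alternatively, and perhaps more in the spirit of the rest of the paper (which prefers explicit generator-relation bookkeeping to invoking classification theorems), I would just write down the isomorphism by hand. Take generators $x,y,z$ of $H_p$ with $z$ central, $[x,y]=z$, and $x^p=y^p=z^p=1$ (the commutator identity $[x^a,y^b]=z^{ab}$ holds since $z$ is central). Map $x\mapsto$ the generator of the acting $\bZ/p$ in $W_p$, $y\mapsto$ the first standard basis vector of $(\bZ/p)^2$, and $z\mapsto$ the second basis vector, which is exactly the image of $y$ under the action $\smatrix 1 1 0 1$ minus $y$, i.e. $z$ maps to $[x,y]$ computed in $W_p$. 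One then verifies that the defining relations of $H_p$ are satisfied in $W_p$ — the key point being the exponent-$p$ check done above — so the assignment extends to a homomorphism, which is surjective since the images generate, hence an isomorphism by order count.

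The main obstacle, such as it is, is the exponent computation in $W_p$: making sure that $\smatrix 1 1 0 1^0 + \smatrix 1 1 0 1^1 + \cdots + \smatrix 1 1 0 1^{p-1}$ vanishes mod $p$, which is where the hypothesis $p>2$ is genuinely used (for $p=2$ the sum $1+\smatrix 1 1 0 1 = \smatrix 0 1 0 0 \neq 0$, and indeed $W_2\cong \bZ/4\xs\bZ/2 = D_4$ has exponent $4$ while $H_2$ is not defined as stated — so the restriction $p>2$ is essential and worth a remark). Everything else is routine: identifying the two candidate nonabelian groups of order $p^3$, or equivalently checking the handful of relations. I would present the argument in the classification style, since it is shortest and makes transparent why this lemma is exactly the statement "the small Heisenberg group is the exponent-$p$ extraspecial group of order $p^3$."
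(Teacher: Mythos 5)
Your argument is correct, but it is organized in the opposite direction from the paper's. The paper starts from $H_p$ and exhibits its structure directly: it finds an abelian normal subgroup $H'\cong(\bZ/p)^2$ of index $p$, observes that the quotient $H_p\to H_p/H'\cong\bZ/p$ splits (precisely because every element of $H_p$ has order $p$), and notes that the resulting conjugation action of the complement on $H'$ is by a nontrivial order-$p$ automorphism, which after a change of basis is $\smatrix 1 1 0 1$ — so $H_p$ is literally a presentation of $W_p$. You instead start from $W_p$: you compute that $(v,t)^p$ involves the geometric series $I+A+\cdots+A^{p-1}=pI+\binom p2 N$, which vanishes mod $p$ exactly when $p>2$, so $W_p$ has exponent $p$; then you invoke the classification of nonabelian groups of order $p^3$ for odd $p$ (two isomorphism types, distinguished by exponent) to conclude. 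Your exponent computation is the content underlying the paper's aside "(every element in $H_p$ has order $p$)" — you have made the parallel fact for $W_p$ explicit, and in doing so pinpointed precisely where $p>2$ enters, which the paper leaves implicit. The trade-off is that you lean on the $p^3$ classification theorem, whereas the paper's route is self-contained (it only needs the elementary fact that a nontrivial unipotent in $\GL_2(\bF_p)$ is conjugate to $\smatrix 1 1 0 1$). Your second, generator-relation version avoids the classification and is closer in spirit to the paper's hands-on construction, just run in reverse. Both are valid; the paper's is marginally more elementary, yours is more illuminating about the role of the hypothesis $p>2$.
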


\begin{proof} Note: $H_p$ has a normal
subgroup
$H'$ of index $p$. So $H'$ is $(\bZ/p)^2$. Further, the map $H_p\to 
H_p/H'=\bZ/p$ splits (every
element in
$H_p$ has order $p$).  Finally, with some choice of basis, $\smatrix 1 1 0 1$ 
has 
order $p$ (acting on $(\bZ/p)^2$). \end{proof}

The following lemma has notation appropriate for our applications. Still it is
very general: We could have $M_k$ can be any $\bZ/p$ module,
$\hat M_D$ any Frattini extension of it with $\bZ/p$ kernel and $V_D^{0}$ the 
elements of $M_k$ lifting to have
order $p$ in
$\hat M_D$.

\begin{lem} \label{liftingGroups} Consider pairs $m_1,m_2\in M_k$ where 
$\lrang{m_1,m_2}$ has 
dimension 2. Denote the group  generated by their lifts by $\lrang{\hat m_1,\hat 
m_2}$.  

Suppose $m_1,m_2\in V_D$. If $p=2$, then $\lrang{\hat m_1,\hat m_2}$ is either a 
Klein  4-group or 
it is $D_4$. If $p \ne 2$,then it is either $(\bZ/p)^2$; $U_p$; or it is $H_p$ 
and the commutator $(\hat m_1 ,\hat m_2 )$ generates the kernel of $\lrang{\hat 
m_1 ,\hat m_2}
\to \lrang{m_1 ,m_2}$.

Suppose $m_1,m_2\in M_k\setminus V_D$. If $p=2$, then $\lrang{\hat m_1,\hat 
m_2}$ is either 
$\bZ/2^2\times \bZ/2$ or it is $Q_8$ (the quaternion group). For $p\ne 2$, 
$\lrang{\hat m_1,\hat m_2}$
is $\bZ/p^2\times \bZ/p$ or it is $U_p$. 

If $m_1\in V_D^0$ and $m_2\in M_k\setminus V_D$,  
$\lrang{\hat m_1,\hat m_2}$ is either $\bZ/p^2\times \bZ/p$ or  $U_p$.  
\end{lem}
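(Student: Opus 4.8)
The plan is to reduce everything to the structure of groups of order $p^3$ generated by two elements, using that $\hat M_D$ is a central $\bZ/p$-extension of the elementary abelian $M_k$. First I would fix $m_1,m_2\in M_k$ with $\lrang{m_1,m_2}$ two-dimensional and set $G=\lrang{\hat m_1,\hat m_2}\le \hat M_D$. Since $\hat M_D$ is a central extension of $M_k$ by $\bZ/p$, the subgroup $G$ is a central extension of $\lrang{m_1,m_2}\cong(\bZ/p)^2$ by a subgroup of $\bZ/p$, hence $G$ is a $p$-group with $|G|\le p^3$, with $G'$ and the Frattini subgroup contained in the central $\bZ/p$, and $G/Z(G)$ noncyclic. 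If $|G|=p^2$ then $G$ is abelian and $G\cong(\bZ/p)^2$ (the case where the commutator dies and both lifts have order $p$). If $|G|=p^3$ then $G$ is one of the two nonabelian groups of order $p^3$ when $p$ is odd — the exponent-$p$ one $H_p$ (which equals $W_p$ by the preceding lemma) and the exponent-$p^2$ one $U_p$ — or one of $\bZ/p^2\times\bZ/p$ and $(\bZ/p)^2$ in the abelian case; for $p=2$ the nonabelian options are $D_4$ and $Q_8$. So the classification of $G$ is automatic; the real content is matching each isomorphism type to the hypothesis on whether $m_1,m_2$ lie in $V_D$.

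Next I would record the arithmetic of $p$-th powers. For $m\in M_k$ and any lift $\hat m$, the element $\hat m^p$ lies in the central $\bZ/p$ and is independent of the choice of $\hat m$ (central extension), so ``$m\in V_D^0$'' is exactly the statement $\hat m^p=1$ for $m\neq 0$, and $m\in V_D$ means $\hat m^p = 1$. The key identity is the central-extension Hall–Witt type formula: in a group whose derived subgroup is central of exponent $p$, $(\hat m_1\hat m_2)^p = \hat m_1^p\,\hat m_2^p\,(\hat m_2,\hat m_1)^{\binom p2}$. For $p$ odd $\binom p2\equiv 0\pmod p$, so $p$-th powers are additive in the lifts; for $p=2$ the correction term $(\hat m_2,\hat m_1)$ survives and is responsible for the appearance of $Q_8$ and $D_4$ rather than products. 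This is the one computation I would actually spell out, since all three cases of the lemma hinge on it.

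Then I would go case by case. If $m_1,m_2\in V_D$: all $p$-th powers of lifts of elements of $\lrang{m_1,m_2}$ vanish when $p$ is odd (by additivity of $p$-th powers plus $V_D$ being closed enough for this two-dimensional span — more precisely every element of $\lrang{m_1,m_2}$ is an $\bF_p$-combination of $m_1,m_2$ and its lift is the corresponding product of $\hat m_1,\hat m_2$ times a central element, whose $p$-th power is the product of the $p$-th powers), so $G$ has exponent $p$; hence $G$ is $(\bZ/p)^2$ if it is abelian and $H_p\cong W_p$ otherwise, and in the nonabelian case $G'=(\hat m_1,\hat m_2)$ is all of the kernel. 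Wait — I must be careful: if $m_1\in V_D$ but some $\bF_p$-combination $am_1+bm_2$ lies outside $V_D$, then $G$ could be $U_p$; that is exactly why the lemma lists $U_p$ as an option in the first case. So the statement to prove there is simply that $G$ has no element of order $p^2$ coming from $\hat m_1$ or $\hat m_2$ individually, leaving $(\bZ/p)^2$, $U_p$, $H_p$ as the possibilities, with $H_p$ precisely when the commutator is nontrivial and $G$ has exponent $p$. If $m_1,m_2\in M_k\setminus V_D$, then $\hat m_1^p$ and $\hat m_2^p$ are nontrivial central elements, so $\hat m_1$ has order $p^2$, forcing $G\cong\bZ/p^2\times\bZ/p$ if $G$ is abelian and, for $p=2$, $G\cong Q_8$ in the nonabelian case (the only nonabelian group of order $8$ in which a noncentral generator has order $4$ and which is not $\bZ/4\times\bZ/2$ — and one checks $D_4$ is excluded because in $D_4$ a generating pair can be taken with one generator of order $2$), and $G\cong U_p$ for $p$ odd in the nonabelian case. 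The mixed case $m_1\in V_D^0$, $m_2\in M_k\setminus V_D$ is the same: $\hat m_2$ has order $p^2$ so $G$ is $\bZ/p^2\times\bZ/p$ or $U_p$, never $H_p$ or $Q_8$, since those have no element of order $p^2$ (resp.\ need both generators of order $4$).

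The main obstacle I anticipate is not the group-theoretic classification, which is standard, but pinning down exactly when $D_4$ versus $Q_8$ occurs for $p=2$ and, dually for $p$ odd, distinguishing $U_p$ from $H_p$ — i.e.\ showing that the isomorphism type is genuinely governed by the $V_D$-membership of $m_1,m_2$ (and not of other vectors in $\lrang{m_1,m_2}$) in the generic situation, and being honest that $U_p$ legitimately appears in the ``both in $V_D$'' case when some other vector of the plane falls outside $V_D$. I would handle this by always working with the distinguished generating pair $\hat m_1,\hat m_2$ and reading off $\mathrm{ord}(\hat m_i)$ directly from $\hat m_i^p\in\bZ/p$, rather than trying to characterize $G$ by an invariant symmetric in the plane; the $p=2$ correction term in the power formula is then the only delicate point, and it cleanly separates $Q_8$ (two order-$4$ generators, correction irrelevant to the order of generators) from $D_4$ (which cannot be generated by two elements of order $4$ only if... — here one simply notes $D_4$ does contain such a pair, namely a rotation of order $4$ and... no: $D_4=\lrang{r,s\mid r^4=s^2=1}$, and any two elements generating it can be chosen as $r$ (order 4) and $s$ (order 2), so a pair with both generators forced to have order $4$ lands in $Q_8$). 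I will phrase the $p=2$ half so that this last subtlety is resolved by the power formula rather than by an ad hoc inspection.
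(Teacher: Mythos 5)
Your proof is correct, and it is essentially the paper's argument streamlined. The paper establishes the identity $(\hat m_1\hat m_2)^p=\hat m_2^p\hat m_1^p$ by telescoping $(\hat m_1\hat m_2)^p$ against the conjugation relation $\hat m_1\hat m_2\hat m_1^{-1}=\hat m_2^{1+pa}$ and then summing $(1+ap)^i$; your use of the Hall collection identity $(\hat m_1\hat m_2)^p=\hat m_1^p\hat m_2^p[\hat m_2,\hat m_1]^{\binom p2}$ in a class-$2$ group with commutator of exponent $p$ is the same computation packaged as a standard lemma, which is cleaner. You also derive the $p=2$ cases directly from the correction term $[\hat m_2,\hat m_1]$, where the paper simply cites the $p=2$ analysis in the earlier reference. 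The matching of isomorphism types to $V_D$-membership via the orders of the distinguished generators and the list of order-$p^3$ groups is the right strategy and is what the paper does.

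The one genuine misstep is your ``wait---I must be careful'' retreat in the first case. Your own collection identity shows that for $p$ odd, if $\hat m_1,\hat m_2$ both have order $p$ then \emph{every} lift $\hat m_1^a\hat m_2^b$ has order $p$, so $\lrang{\hat m_1,\hat m_2}$ has exponent $p$ and hence $am_1+bm_2\in V_D$ for all $a,b$: for odd $p$, the $V_D$-membership of the two generators already forces the whole plane into $V_D$, and $U_p$ (which requires a generator of order $p^2$) cannot occur. Your worry about some combination $am_1+bm_2$ escaping $V_D$ is therefore unfounded, and the explanation you offer for the appearance of $U_p$ in the lemma's first alternative is not correct. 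In fact the paper's own proof for this case says only that the listed properties ``determine the group to be the two cases,'' i.e.\ $(\bZ/p)^2$ or $H_p$; the extra $U_p$ in the lemma's statement looks like an editorial artifact. Trust your computation here rather than reverse-engineering from the statement. A secondary, more cosmetic point: your aside ruling out $D_4$ in the $m_1,m_2\notin V_D$, $p=2$ subcase wobbles before landing on the correct reason --- namely, $D_4$ has only two elements of order $4$ (a rotation and its inverse), which generate a cyclic subgroup and not $D_4$, so a pair of order-$4$ generators forces $Q_8$. State that cleanly rather than appealing to the existence of some generating pair with an involution.
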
 

\begin{proof} The cases for $p=2$ are in \cite[Lem.~2.24]{BFr}. Now consider the 
cases where $p$ is odd. 
If $m_1,m_2\in V_D^0$, then their lifts $ \hat m_1,\hat m_2$ have order $p$. 
Further, they either
commute or $(\hat m_1,\hat m_2)$ generates the kernel of $\lrang{\hat m_1,\hat 
m_2}\to \lrang{ m_1,
m_2}$. These properties determine the group to be the two cases in the 
statement. 

If $m_1,m_2\in M_k\setminus V_D$ then $ \hat m_1,\hat m_2$ have order $p^2$. If 
$ \hat m_1$ and 
$\hat m_2$ commute, the result is that in the lemma. Assume, however, they don't 
commute. Let
$H$ be an index $p$ (normal) subgroup of $\lrang{\hat m_1,\hat m_2}$. Suppose 
the natural map
$$\mu: \lrang{\hat m_1,\hat m_2}\to \lrang{\hat m_1,\hat m_2}/H$$ splits. Then, 
the group is either $U_p$ or
it is
$W_p=H_p$. The latter, however, has no elements of order $p^2$. So it is the 
former. We are done if we
show $\mu$ splits. Equivalently, with $C$ the center of $\lrang{\hat m_1,\hat 
m_2}$, we are done if some
nontrivial element of $\lrang{\hat m_1,\hat m_2}/C$ lifts to have order $p$ in
$\lrang{\hat m_1,\hat m_2}$. 

Conjugate $\hat m_2$ by $\hat m_1$. Since the quotient $\lrang{\hat m_1,\hat 
m_2}/C$ is abelian, 
the conjugate $\hat m_1 \hat m_2 \hat m_1^{-1}$ is $\hat m_2^{1+pa}$ for some 
integer $1\le a<p$.
Compute:   
$$A(\hat m_1,\hat m_2)\eqdef(\hat m_1\hat m_2)^p=(\prod_{i=1}^p(\hat m_1^i\hat 
m_2\hat 
m_1^{-i})^p \hat m_1^p=\hat m_2^{\sum_{i=1}^p (1+ap)^i}(\hat m_1)^p=\hat m_2^p 
\hat m_1^p.$$ 
Replace $\hat m_1$ by some $p'$ power of it to assure $A(\hat m_1,\hat m_2)$ is 
1, though
$\hat m_1$ and $\hat m_2$ still generate $\lrang{\hat m_1,\hat m_2}/C$. So $\hat 
m_1\hat m_2$ has order
$p$, giving the desired splitting. 

The last case works as the split case of the previous argument. 
\end{proof} 

Lem.~\ref{liftingGroups} differentiates between $p=2$ and general $p$. 

\begin{cor} \label{RDGk} Assume   
$\hat m_i$ is a lift of $m_i\in M_k\setminus V_D$ to $\hat
M_D$, $i=1,2$. Suppose too that $\lrang{m_1}\ne \lrang{m_2}$ and $V_D^0\cap 
\lrang{m_1,m_2}$ is
nonempty. Then,
$H_{m_1,m_2}=\lrang{\hat  m_1,\hat m_2}$ is isomorphic  to $\bZ/p^2\times  
\bZ/p$. 
\end{cor}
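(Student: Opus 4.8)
The plan is to apply Lemma~\ref{liftingGroups} to the pair $m_1,m_2$ and then eliminate the one case in that lemma's dichotomy which is not $\bZ/p^2\times\bZ/p$. Since $m_1,m_2\in M_k\setminus V_D$ and $\lrang{m_1}\ne\lrang{m_2}$, so that $\lrang{m_1,m_2}$ is genuinely two-dimensional, Lemma~\ref{liftingGroups} tells us that $H_{m_1,m_2}=\lrang{\hat m_1,\hat m_2}$ is either $\bZ/p^2\times\bZ/p$ (for $p=2$, $\bZ/2^2\times\bZ/2$) or $U_p$ (for $p=2$, $Q_8$). So the whole content of the corollary is to rule out $U_p$ (resp.~$Q_8$) using the extra hypothesis that $V_D^0\cap\lrang{m_1,m_2}$ is nonempty.

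First I would recall the relevant structural fact about $U_p$ (and $Q_8$): every element of $U_p=\bZ/p^2\xs\bZ/p$ outside the normal $\bZ/p^2$ has order $p$, but conversely an element of order $p$ sitting inside the normal $\bZ/p^2$ subgroup must be the unique subgroup of order $p$ there, which is precisely the center, i.e.\ the kernel $\bZ/p$ of $H_{m_1,m_2}\to\lrang{m_1,m_2}$. In other words, in $U_p$ (and likewise in $Q_8$), no element lying over a nonzero vector of the two-dimensional quotient $\lrang{m_1,m_2}$ has order $p$: the order-$p$ elements outside the center all lie in a complementary $\bZ/p$ that maps isomorphically to a one-dimensional subspace, but wait — one must be careful, those do map to nonzero vectors. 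The cleaner route: in $U_p$ the subgroup of order-$p$ elements together with the identity is exactly the unique elementary abelian subgroup of order $p^2$, namely $(\bZ/p)\times(\text{center})$, which maps onto a \emph{one}-dimensional subspace of $\lrang{m_1,m_2}$. Hence the set of $m\in\lrang{m_1,m_2}$ all of whose lifts (equivalently, some lift) have order $p$ is contained in a line. So I would argue: if $H_{m_1,m_2}\cong U_p$, then $V_D^0\cap\lrang{m_1,m_2}$ is contained in a single one-dimensional subspace, and in particular is a proper subset; this is consistent with it being nonempty, so I need the sharper statement. The key point to nail down is that $m_1$ and $m_2$ are themselves \emph{both} outside $V_D$, i.e.\ both have order-$p^2$ lifts, so neither of them lies on that distinguished line; but a nonzero element $m\in V_D^0\cap\lrang{m_1,m_2}$ on the line, written $m=am_1+bm_2$, forces (since in $U_p$ the order-$p$ locus is a subgroup mod center) that $m_1$ or $m_2$ also lands there unless $a$ or $b$ vanishes — and if say $b=0$ then $m\in\lrang{m_1}$ has order $p$ in its lift, contradicting $m_1\notin V_D$ (a nonzero scalar multiple of $m_1$ has a lift of order $p$ iff $\hat m_1^p$ is trivial, i.e.\ iff $m_1\in V_D$). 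That contradiction eliminates $U_p$.

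So the core steps, in order, are: (1) invoke Lemma~\ref{liftingGroups} to reduce to the two cases $\bZ/p^2\times\bZ/p$ and $U_p$ (resp.~$Q_8$ for $p=2$); (2) observe that in $U_p$ and in $Q_8$ the preimage in $\lrang{m_1,m_2}$ of the order-$p$ locus is a single one-dimensional subspace $L$, and $L$ does \emph{not} contain $m_1$ or $m_2$ since both are outside $V_D$ (here I use that $\hat m^p$ depends only on $m$, stated just before Lemma~\ref{liftingGroups}, so $V_D$ is a union of lines and $m\in\lrang{m_i}\cap V_D^0$ would force $m_i\in V_D$); (3) take the hypothesized nonzero $m\in V_D^0\cap\lrang{m_1,m_2}$, write it in the basis $m_1,m_2$, and note $m\in L$; since $L\ne\lrang{m_1}$ and $L\ne\lrang{m_2}$ and $L$ is a line, $m$ has both coordinates nonzero — but then the abelian-mod-center structure of $U_p$ shows $m_1\equiv -\tfrac ab\, m_2$ has an order-$p$ lift too, i.e.\ $m_1\in V_D$, a contradiction; hence $H_{m_1,m_2}\cong\bZ/p^2\times\bZ/p$. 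The main obstacle, and the place I would spend the most care, is step (2)–(3): getting the linear-algebra bookkeeping exactly right about which elements of $\lrang{m_1,m_2}$ have order-$p$ lifts in $U_p$, and making sure the argument is uniform enough to also cover $Q_8$ when $p=2$ (where "the order-$p$ locus" degenerates to just $\{1,\text{center}\}$, so $V_D^0\cap\lrang{m_1,m_2}$ would have to be empty, immediately contradicting the hypothesis — so the $p=2$ case is actually easier and can be dispatched first).
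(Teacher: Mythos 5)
Your $p=2$ case is correct, and it is essentially the paper's argument: $Q_8$ has no noncentral element of order $2$, so $V_D^0\cap\lrang{m_1,m_2}=\emptyset$ and $Q_8$ is ruled out immediately. The gap is in your odd-$p$ case, step (3). You write a nonzero $m=am_1+bm_2\in L$ (the line of order-$p$ lifts in $U_p$ mod center) and assert that, since $L$ is a subgroup, this ``forces $m_1$ or $m_2$ also lands there unless $a$ or $b$ vanishes.'' That implication is simply false as linear algebra: a one-dimensional subspace of a two-dimensional space need not contain either basis vector. The line can perfectly well be, say, $\lrang{m_1m_2}$, distinct from both $\lrang{m_1}$ and $\lrang{m_2}$, with $a=b=1$ and no contradiction.

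Worse, this is exactly what happens in $U_p$ for odd $p$, so the gap cannot be patched within your framework. Take $\hat m_1=(1,0)$ and $\hat m_2=(1,1)$ in $U_p=\bZ/p^2\xs\bZ/p$ (action $1\mapsto 1+ap$). Since for odd $p$ one has $\sum_{i=0}^{p-1}(1+ap)^i\equiv p\pmod{p^2}$, the element $(1,1)$ has order $p^2$; so both generators are of order $p^2$, i.e.\ $m_1,m_2\in M_k\setminus V_D$, and $\lrang{m_1}\ne\lrang{m_2}$. Yet $\hat m_1^{-1}\hat m_2=(-ap,1)$ has order $p$, so $m_1^{-1}m_2\in V_D^0\cap\lrang{m_1,m_2}$ and the hypotheses of the Corollary are met with $H_{m_1,m_2}\cong U_p$, not $\bZ/p^2\times\bZ/p$. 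For what it's worth, the paper's own proof is different from yours --- it enumerates groups of order $p^3$ and asserts ``elements of order $p^2$ don't generate $U_p$'' --- but that assertion is the $p=2$ fact about $D_4=U_2$ extrapolated to odd $p$, where it fails (the pair $(1,0),(1,1)$ above generates $U_p$). So your proof has a concrete broken step, but you should also be aware that the statement itself, as written, appears to require additional hypotheses or a different argument when $p$ is odd.
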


\begin{proof} The hypotheses say that  
$H_{m_1,m_2}$ has two generators of order $p^2$. It also has two generators with 
respective orders $p^2$
and  $p$. The resulting group has order $p^3$, and $(\bZ/p)^2$ as a quotient. 
Only  $\bZ/p^2\times
\bZ/p$ has these properties: elements  of order 4 (resp. $p^2$) don't generate 
the dihedral 
group (resp.~$U_p$); 
and the quaternion group's only element of order 2 is in its center.
\end{proof} 

 We use the following hypotheses to describe the possible groups $\hat M_D$ that 
actually can occur as
$\phi_D^{-1}(M_k)$ with $M_k$ the Frattini module for $G_k$. 
\begin{edesc} \label{modassume} \item \label{modassumea} 
For each $m_1 ,m_2\in M_k \setminus V_D$, with
$\lrang{m_1} \ne
\lrang{m_2}$ , $V^0_D \cap \lrang{m_1 ,m_2}\ne \emptyset$.
\item \label{modassumeb} Elements of $M_k \setminus V_D$ generate $M_k$.
\item \label{modassumec} $V_D$ is a submodule of $M_k$. \end{edesc}

\begin{prop} \label{liftMk} There always exists $\alpha_D \in M_k \setminus 
V_D$. 

Suppose \eql{modassume}{modassumea} and \eql{modassume}{modassumeb} hold. Then 
$\hat M_D$ is an abelian
group, and therefore a $\bZ/p^2 [G_k]$ module. Further, 
\eql{modassume}{modassumec} then holds and 
$\cup^{p-1}_{j=0} V_D \alpha_D^j=M_k$.

If $p=2$, then \eql{modassume}{modassumec} holds if and only if $\hat V_D$ is an 
abelian group isomorphic
to $V_D\times \bZ /2$. If both  \eql{modassume}{modassumea} and
\eql{modassume}{modassumec} hold, then $\hat M_D$ is a
$\bZ/4[G_k]$ module.
\end{prop}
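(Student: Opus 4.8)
The plan is to work entirely inside the group $R_D$, using $\hat M_D=\phi_D^{-1}(M_k)$ and the structural information that Lem.~\ref{liftingGroups} and Cor.~\ref{RDGk} provide about two-generator subgroups of $\hat M_D$. The first assertion, that $M_k\setminus V_D$ is nonempty, I would get from Lemma~\ref{nonsplitp} together with Thm.~\ref{pdivis}: if every nonidentity element of $M_k$ lifted to order $p$, then the central extension $\hat M_D\to M_k$ would split on $\hat M_D$, and pulling this back one sees the extension $\phi_{1,0}^{-1}(G_0')\to G_0'$ (more precisely the relevant analogue at level $k$) would split over a $p$-Sylow, contradicting the fact that $p$-elements of $G_k$ lift to order $p\cdot\ord(g)$. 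Equivalently one can argue directly: $\hat M_D$ is a Frattini $\bZ/p$-extension of $M_k$ that is \emph{not} the trivial direct-sum extension (it arises from a nonsplit Schur quotient $R_D\to G_k$), so it cannot be that all of $M_k$ lifts to order $p$, i.e.\ $V_D\ne M_k$. Fix such an $\alpha_D\in M_k\setminus V_D$; its lift $\hat\alpha_D$ has order $p^2$.

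Next, assuming \eql{modassume}{modassumea} and \eql{modassume}{modassumeb}, I would show $\hat M_D$ is abelian by checking that any two of its elements commute. Take lifts $\hat m_1,\hat m_2$ of $m_1,m_2\in M_k$. If $\lrang{m_1}=\lrang{m_2}$ they lie in a common cyclic-mod-center piece and commute. Otherwise, if at least one of $m_1,m_2$ lies in $V_D$, Lem.~\ref{liftingGroups} (the ``$m_1\in V_D^0$, $m_2\in M_k\setminus V_D$'' case and the ``$m_1,m_2\in V_D$'' case) leaves as potentially noncommuting options only $D_4$/$Q_8$-type or $H_p$-type groups; but by \eql{modassume}{modassumea} applied after replacing $m_2$ by $m_2\alpha_D$ (so that we can compare against a genuine $M_k\setminus V_D$ element) and by Cor.~\ref{RDGk}, the presence of an order-$p$ element of $V_D^0$ inside $\lrang{m_1,m_2}$ forces the group to be $\bZ/p^2\times\bZ/p$ or $(\bZ/p)^2$ — abelian. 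If both $m_1,m_2\in M_k\setminus V_D$ with $\lrang{m_1}\ne\lrang{m_2}$, then \eql{modassume}{modassumea} gives $V_D^0\cap\lrang{m_1,m_2}\ne\emptyset$, so Cor.~\ref{RDGk} applies directly and $\lrang{\hat m_1,\hat m_2}\cong\bZ/p^2\times\bZ/p$, again abelian. Since $M_k\setminus V_D$ generates $M_k$ by \eql{modassume}{modassumeb}, and any pair of elements of $M_k$ can be written in terms of such generators, commutativity of all such pairs propagates to all of $\hat M_D$: the commutator of arbitrary lifts is a product of commutators of lifts of generators (the kernel being central), each of which vanishes. Hence $\hat M_D$ is abelian, so it is a $\bZ/p^2$-module, and because the $G_k$-action on $M_k$ lifts compatibly (the extension is central, hence $G_k$-equivariant), $\hat M_D$ is a $\bZ/p^2[G_k]$-module.

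For the remaining clauses: once $\hat M_D$ is abelian, $V_D$ being exactly the set of elements killed by $p$ in $\hat M_D$ — equivalently the image of $p\hat M_D$'s complement, or concretely $V_D=\{m: \hat m^p=1\}$ — is visibly a subgroup, indeed a $G_k$-submodule (it is $G_k$-stable since the action is equivariant and ``having a lift of order dividing $p$'' is preserved), giving \eql{modassume}{modassumec}. Since $\hat M_D$ is a finite abelian $p$-group of exponent $p^2$ with $\hat\alpha_D$ of order $p^2$, the cyclic subgroup $\lrang{\hat\alpha_D}$ has order $p^2$ and $\hat M_D/\lrang{\hat\alpha_D}$ together with $V_D$ accounts for all of $M_k$; explicitly each $m\in M_k$ differs from a suitable $\alpha_D^j$ ($0\le j\le p-1$) by an element lifting to order dividing $p$, i.e.\ an element of $V_D$, so $\bigcup_{j=0}^{p-1}V_D\alpha_D^j=M_k$. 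Finally for $p=2$: \eql{modassume}{modassumec} says $V_D$ is a subgroup, and then $\hat V_D\to V_D$ is a central $\bZ/2$-extension in which — by the ``$m_1,m_2\in V_D$'' case of Lem.~\ref{liftingGroups} with $p=2$ — every two-generator subgroup is Klein four or $D_4$; ruling out $D_4$ exactly when $\hat V_D$ is elementary abelian, i.e.\ $\hat V_D\cong V_D\times\bZ/2$, and conversely that splitting forces $V_D$ to be a subgroup. Granting both \eql{modassume}{modassumea} and \eql{modassume}{modassumec} when $p=2$, the argument above shows $\hat M_D$ is abelian of exponent dividing $4$, hence a $\bZ/4[G_k]$-module.

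I expect the main obstacle to be the commutativity step: one must be careful that replacing $m_2$ by $m_2\alpha_D$ to land in $M_k\setminus V_D$ does not change the commutator $[\hat m_1,\hat m_2]$ in an uncontrolled way — it does not, since $\hat\alpha_D$ is (once we know the relevant two-generator subgroups are abelian) central in the groups being compared, but establishing this without circularity requires ordering the case analysis so that Cor.~\ref{RDGk} is invoked only on genuine $M_k\setminus V_D$ pairs and then bootstrapped. Handling \eql{modassume}{modassumeb} correctly — that generation of $M_k$ by elements of $M_k\setminus V_D$ really does let one reduce commutativity of all lifts to commutativity of lifts of those generators — is routine given centrality of the kernel, but should be spelled out.
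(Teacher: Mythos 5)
Your argument for the first assertion (the existence of $\alpha_D \in M_k \setminus V_D$) has a genuine gap. Both versions you offer rest on the principle that a nonsplit (or non-direct-sum) central $\bZ/p$-extension $\hat M_D \to M_k$ must have some element of $M_k$ lifting to order $p^2$. That principle is false when $\hat M_D$ is allowed to be nonabelian: for $p$ odd, the Heisenberg group $H_p$ (the paper's $W_p$, exponent $p$, order $p^3$) is a nonsplit Frattini central $\bZ/p$-extension of $(\bZ/p)^2$ in which \emph{every} element of $(\bZ/p)^2$ lifts to have order $p$. Any normal subgroup of order $p^2$ in $H_p$ contains the commutator subgroup, which is the kernel, so the extension cannot split, yet $V_D$ would be all of $(\bZ/p)^2$. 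Your Lemma~\ref{nonsplitp}/Thm.~\ref{pdivis} route also does not repair this: Thm.~\ref{pdivis} controls lifting from $G_k$ to $G_{k+1}$, not the further lift from $G_{k+1}$ to $R_D$, so there is no contradiction with $p$-elements lifting to order $p\cdot\ord(g)$. (Your reasoning happens to work for $p=2$, where exponent~2 forces commutativity, but the claim is for all $p$.) The paper's argument is different and is the one that actually works: if $M_k \setminus V_D$ were empty, then $\ker(R_D \to G_k)=\hat M_D$ would have exponent $p$, so the Frattini cover $R_D \to G_k$ would have exponent-$p$ kernel; by the universality of $G_{k+1}\to G_k$ among exponent-$p$ Frattini covers of $G_k$, $\hat M_D$ would have to be a quotient of $M_k$, which is impossible since $|\hat M_D|=p\,|M_k|$.

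The remaining parts of your proof are essentially the paper's: abelianness is obtained from Cor.~\ref{RDGk} on pairs in $M_k\setminus V_D$ and propagated from a generating set supplied by \eql{modassume}{modassumeb} (your extra case analysis for pairs involving $V_D$ elements is unnecessary, since you only need commutativity of lifts of a generating set drawn from $M_k\setminus V_D$, plus centrality of the kernel); the $\bZ/p^2[G_k]$-module structure then comes from conjugation by lifts of $g\in G_k$ being well-defined once $\hat M_D$ is abelian; the $V_D$-submodule statement and the decomposition $\bigcup_{j} V_D\alpha_D^{j}=M_k$ follow as you say from $\hat m^p$ lying in the kernel and being killed by the right power of $\hat\alpha_D$; and your $p=2$ equivalence via exponent-2 forcing elementary abelian is correct, though you should spell out that "$V_D$ is a subgroup $\Rightarrow$ $\hat V_D$ has exponent $2$ $\Rightarrow$ $\hat V_D$ is elementary abelian, hence $V_D\times\bZ/2$", and conversely that $\hat V_D$ being a group forces $V_D$ to be a subgroup.
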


\begin{proof} Suppose $ M_k \setminus V_D$ is empty. Then, a lift of any $m\in 
M_k$
(excluding the identity) to $R_D$ has order $p$. So, $R_D\to G_k$ (a Frattini 
extension) has kernel
of exponent $p$. Since, however, $G_{k +1} \to G_k$ is the universal exponent 
$p$-Frattini
cover of $G_k$, this shows $\hat M_D$ is a quotient of $M_k$. This contradiction 
produces
$\alpha_D \in M_k \setminus V_D$.

Now suppose \eql{modassume}{modassumea}. Cor.~\ref{RDGk} implies, if $m_1,m_2\in
M_k\setminus V_D$, then $H_{m_1,m_2}$ is an abelian group. If, further, 
\eql{modassume}{modassumeb} holds, then $\hat M_D$ has pairwise commuting 
generators. So, $\hat
M_D$ is an abelian group. For $\hat g$ a lift of $g\in G_k$ to $R_D$,  
$\hat g$ conjugation action  on  $\hat M_D$ depends only on 
$g$. This shows $\hat M_D$ is a $\bZ/p^2 [G_k]$ module.

Continue assuming \eql{modassume}{modassumea} and \eql{modassume}{modassumeb} 
hold. Then, for any
$\hat m$ lifting $m \in M_k$ to $R_D$, there is an integer $j$ with
$$((\hat m) (\hat\alpha_D)^{-j})^p=\hat m^p (\hat \alpha_D)^p=1.$$ Therefore $m
\alpha_D^{-j}\in V_D$ and $\lrang{\alpha_D}$  
fills out $M_k/V_D=\bZ/p$. 

Now assume $p =2$,  and \eql{modassume}{modassumea} and 
\eql{modassume}{modassumec} hold. 
If $m_1 \in M_k \setminus V_D$ and $v_2\in V_D^{0}$, then 
\eql{modassume}{modassumec} says $m_2=m_1v_2$. So, $H_{m_1,m_2}$ satisfies the
hypotheses of Cor.~\ref{RDGk} and is abelian. 
Consider  $H_{v_1,v_2}$ with $v_1,v_2\in V_D$. If this group has order larger 
than 4, then it
contains an order $4$ element. Thus, $\lrang{v_1,v_2}$ contains an element of 
$M_k\setminus
V_D$, contrary to \eql{modassume}{modassumec}. Conclude: For any $m_1,m_2\in 
M_k$,
$H_{m_1,m_2}$ is abelian. As elements $\hat m$ with $m\in M_k$ generate $\hat 
M_D$, this 
implies $\hat M_D$ is abelian.  
\end{proof}
 
\subsection{When $\hat M_D$ is abelian} \label{SchurQuotient} We now 
characterize when   
$\hat M_D$ is an abelian group (and so a $\bZ/p^2[G_k]$
module). This will play a big role in the expanded version of this paper. When 
this happens we call $D$ an {\sl
abelian\/}
$\bZ/p$ Schur quotient (of the level
$k$ Schur multiplier). The tool for this characterization of abelian Schur 
quotients starts with \cite[Prop.
9.6]{BFr}. We remind of the setup. 

Let $\psi_k: R_k\to G_k$ be the universal exponent
$p$ central extension of 
$G_k$. This exists from the $p$-perfect assumption \cite[Def. 3.18]{BFr}. Write 
$R_k$ as $\tG p/\ker_k^*$. 
Denote the closure of
$\lrang{(\ker_k,\ker_k^*), (\ker^*_k)^p}$ in $\ker_{k+1}$ by $\ker_{k+1}'$. 
Then, $\ker_{k+1}'$ defines $\tG
p/\ker_{k+1}'=R_{k+1}'$. 

A lift of $m \in
\ker(\phi_{k+1,k})$ to $\ker(R_{k+1}' \to G_k)$ has order $p^2$  if and only the 
image of $m$ in
$\ker(R_k\to G_k)$ is nontrivial. We now take advantage of how elements of $M_k$ 
with trivial images in
$\ker(R_k\to G_k)$ form a submodule. This is the heart of 
Prop.~\ref{abelianmod}, except we substitute a
$\bZ/p$ quotient $R_{D_{k-1}}$ of $R_k$ for $R_k$, etc. 

\subsubsection{Antecedents to a $\bZ/p$ Schur quotient} \label{antecedents}
Let $R_{D_{k\nm 1}}$ (resp.~$R_{D_{k}}$) be a Schur quotient at level $k$ 
(resp.~level$k+1$). 

\begin{defn} Suppose $\alpha
\in
\tG p$ generates $\ker(R_{D_{k\nm1}}\to G_{k})$ and 
$\alpha^p$ generates 
$\ker(R_{D_{k}}\to G_{k+1})$. Refer to
$R_{D_{k\nm 1}}$ as {\sl antecedent\/} to
$R_{D_{k}}$. \end{defn}  

\begin{prop} \label{abelianmod} In the notation above,  there exists $R_{D_{k\nm 
1}}$  antecedent to
$R_{D_{k}}$ if and only if the conditions of \eqref{modassume} hold for 
$R_{D_{k}}$. 
\end{prop}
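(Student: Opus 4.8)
The plan is to prove the biconditional in Proposition~\ref{abelianmod} by translating the three module conditions in \eqref{modassume} into statements about orders of lifts in the central extensions $R_{D_{k\nm1}}$ and $R_{D_{k}}$, and then reading off the equivalence from the group-theoretic dictionary built in Lemma~\ref{liftingGroups}, Corollary~\ref{RDGk} and Proposition~\ref{liftMk}. The key observation to exploit is the one recorded just above the statement: a lift of $m\in\ker(\phi_{k+1,k})=M_k$ to $\ker(R_{D_{k\nm1}}\to G_k)$ has order $p^2$ if and only if the image of $m$ in $\ker(R_{D_{k\nm1}}\to G_k)$ is nontrivial; so the submodule $V_{D_k}\subseteq M_k$ of elements whose lifts to $R_{D_k}$ have order dividing $p$ is \emph{exactly} the kernel of the module map $M_k\to\ker(R_{D_{k\nm1}}\to G_k)$ when $R_{D_{k\nm1}}$ is antecedent to $R_{D_k}$. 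That instantly gives condition \eql{modassume}{modassumec}: $V_{D_k}$ is a submodule because it is the kernel of a $G_k$-module homomorphism.

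For the forward direction, suppose $R_{D_{k\nm1}}$ antecedent to $R_{D_k}$ exists. First I would fix $\alpha\in\tG p$ generating $\ker(R_{D_{k\nm1}}\to G_k)$ with $\alpha^p$ generating $\ker(R_{D_k}\to G_{k+1})$, and identify $V_{D_k}$ with $\ker\bigl(M_k\to\ker(R_{D_{k\nm1}}\to G_k)\bigr)$ as above, which settles \eql{modassume}{modassumec}. Condition \eql{modassume}{modassumeb} (that $M_k\setminus V_{D_k}$ generates $M_k$) follows because $M_k$ is indecomposable as a $\bZ/p[G_k]$ module (cited from \cite{Ben1}, \cite{FrKMTIG}) and $V_{D_k}$ is a proper submodule: if $M_k\setminus V_{D_k}$ generated only a proper submodule $M'$, then $M'$ together with a complement would contradict indecomposability — more carefully, $M_k/V_{D_k}\cong\bZ/p$ is a trivial quotient and the preimage of any $G_k$-submodule missing $\alpha_D$ would split off, so the complement forces $V_{D_k}$ itself to be a direct summand unless $M_k\setminus V_{D_k}$ generates everything. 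Condition \eql{modassume}{modassumea} is the heart of the matter: given $m_1,m_2\in M_k\setminus V_{D_k}$ with $\lrang{m_1}\ne\lrang{m_2}$, their images in the one-dimensional-per-element picture of $\ker(R_{D_{k\nm1}}\to G_k)$ both map to the single generator direction, so some $\bZ/p$-combination $c_1m_1+c_2m_2$ lands in $V_{D_k}^0$; nontriviality of that combination uses $\lrang{m_1}\ne\lrang{m_2}$. This is exactly where the existence of the antecedent — i.e.\ the fact that the $\bZ/p$ kernel at level $k$ is itself the $p$-th power of a genuine element $\alpha$ lifting further — is used, because it forces the two order-$p^2$ lifts to differ by an element of order $p$.

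For the converse, assume \eqref{modassume} holds for $R_{D_k}$. By Proposition~\ref{liftMk}, $\hat M_{D_k}$ is abelian and a $\bZ/p^2[G_k]$ module, and $\cup_{j=0}^{p-1}V_{D_k}\alpha_{D_k}^j=M_k$. I would then construct the antecedent $R_{D_{k\nm1}}$ directly: use the universal exponent-$p$ central extension $\psi_k: R_k\to G_k$ and the chain $\ker_{k+1}'$ from \S\ref{SchurQuotient} to produce $R_{k+1}'$, observe that $\hat M_{D_k}$ being a $\bZ/p^2[G_k]$ module means $M_k\to\ker(R_{k+1}'\to G_k)$ has image with a well-defined $\bZ/p$ quotient determined by $V_{D_k}$, and push out along that quotient to get a $\bZ/p$ Schur quotient $R_{D_{k\nm1}}$ at level $k$ whose kernel is generated by an $\alpha$ with $\alpha^p$ generating $\ker(R_{D_k}\to G_{k+1})$ — the last assertion is forced because \eql{modassume}{modassumec} says precisely that the order-$p^2$ locus in $M_k$ descends correctly. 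The well-definedness of this pushout, and checking it is the \emph{same} $\bZ/p$ quotient on both sides so that $\alpha^p$ genuinely generates the level-$(k+1)$ kernel, is the step I expect to be the main obstacle: it requires matching two a priori independently defined $\bZ/p$ quotients (one from the level-$k$ Schur multiplier, one from $D_k$ inside $M_{k+1}$), and the matching rests on the uniqueness statement for central $\bZ/p$ extensions of $G_k$ through $G_{k+1}$ guaranteed by $p$-perfectness (\cite[\S3.6.1]{BFr}) together with Proposition~\ref{1-dim}. Once that identification is in hand, both directions close up.
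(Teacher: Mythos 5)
Your forward direction broadly matches the paper's: both use \cite[Prop.~9.6]{BFr} to identify $V_{D_k}$ with the kernel of the module map $M_k\to\ker(R_{D_{k\nm1}}\to G_k)$, which settles \eql{modassume}{modassumec}, and both get \eql{modassume}{modassumea} by comparing the rank-2 span $\lrang{m_1,m_2}$ against its rank-1 image in $\ker(R_{D_{k\nm1}}\to G_k)\cong\bZ/p$. However, your argument for \eql{modassume}{modassumeb} is off track. You invoke indecomposability of $M_k$ and a splitting-off heuristic; neither is needed, and the indecomposability of $M_k$ for $k\ge1$ as a $\bZ/p[G_k]$-module isn't something the paper claims at this level of generality. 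The real reason is elementary and is what the paper's ``clearly'' means: $V_{D_k}$ is a proper subgroup of the elementary abelian $p$-group $M_k$, and the complement of any proper subgroup generates the whole group (for $v\in V_{D_k}$ and $m\in M_k\setminus V_{D_k}$, both $v+m$ and $-m$ lie outside $V_{D_k}$, and $v=(v+m)+(-m)$).

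The converse direction has a genuine gap, and you have in fact put your finger on it without resolving it. Your pushout construction from $R_{k+1}'$ would produce a central extension of $G_k$ by $\bZ/p$ only if the $G_k$ action on the quotient $M_k/V_{D_k}$ is trivial --- that is, only if $M_k/V_{D_k}\cong\one_{G_k}$. You describe the needed step as ``matching two a priori independently defined $\bZ/p$ quotients'' and defer it, but that triviality is precisely the substance of the proposition, not a bookkeeping point, and your $p$-perfectness/uniqueness appeal does not supply it: Prop.~\ref{liftMk} gives only that $V_{D_k}$ is a codimension-1 $G_k$-submodule, so $M_k/V_{D_k}$ is some 1-dimensional $G_k$-module, which need not be trivial a priori. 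The paper closes this gap by a specific contradiction argument: if $M_k/V_{D_k}$ were a nontrivial 1-dimensional module, the generalization of \cite[Prop.~9.6]{BFr} to arbitrary 1-dimensional modules in the first Loewy layer would still produce an $\alpha\in\tG p$ with $\alpha^p$ generating $\ker(R_{D_k}\to G_{k+1})$, but then $G_{k+1}$ would act nontrivially on $\alpha^p\bmod\ker_{k+1}$, contradicting that $\ker(R_{D_k}\to G_{k+1})$ is the trivial $G_{k+1}$-module by hypothesis. You need some version of that centrality-contradiction to close the converse; without it the construction you sketch does not yield a Schur quotient, only a not-necessarily-central $\bZ/p$-kernel cover.
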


\begin{proof}[Outline of Proof] Assume there exists $R_{D_{k\nm 1}}$  antecedent 
to
$R_{D_{k}}$. \cite[Prop. 9.6]{BFr}: $V_{D_k}$ consists of $m\in M_k$ that
map  trivially to $\ker(R_{D_{k\nm 1}}\to G_k)$; those $m$ that don't map 
trivially to
$\ker(R_{D_{k\nm 1}}\to G_k)$ form the nonidentity cosets in $M_k$ of $V_{D_k}$. 
Clearly these generate
$M_k$. That shows 
\eql{modassume}{modassumea}. Suppose $m_1,m_2\in M_k\setminus V_D$, and 
$\lrang{m_1,m_2}$ has rank 2. Since
the image of $\lrang{m_1,m_2}$ in $\ker(R_{D_{k\nm 1}}\to G_k)$ has rank 1, 
there must be elements of
$V_D^{0}$ in the kernel. This establishes 
\eql{modassume}{modassumeb}. 

Now consider the converse. Assume the conditions of \eqref{modassume} hold for 
$R_{D_{k}}$.
Prop.~\ref{liftMk} implies that  $V_D$ is a $\bZ/p$ module,  
so a $\bZ/p[G_k]$ module, having codimension 1 in $M_k$. So, $M_k/V_D$ is a 1-
dimensional $G_k$ module. In
notation from Prop.~\ref{liftMk}, choose $\alpha_D\in M_k \setminus V_D$ to have 
its image generate $M_k/V_D$. 

Suppose $M_k/V_D$ is not the trivial $G_k$ module. We can generalize \cite[Prop. 
9.6]{BFr} 
to suit any 1-dimensional $G_k$ module appearing in the 1st Loewy layer of 
$M_k$.  This would
produce $\alpha \in \tG p$ mapping to the image of $\alpha_D$ in 
$\ker(R_{D_{k\nm 1}}\to G_k)$ with the image
of  $\alpha^p$ generating  $\ker(R_{D_{k}}\to G_{k+1})$. Since, however, $G_k$ 
does not act trivially on
$\alpha_D \mod
\ker_k$, $G_{k+1}$ would not act trivially on $\alpha^p \mod \ker_{k+1}$. This 
contradicts that $\ker(R_{D_k}
\to G_{k+1})$ is the trivial $G_{k+1}$ module. 
\end{proof}

\subsubsection{The example $A_4$ and $p=2$} \label{VDs} For $p=2$, we stretch 
the
module theory discussion of
\cite[Ex.~9.2]{BFr}. As above, with   
$G_1=G_1(A_4)$, let $R_1$ be the universal exponent 2 central extension of 
$G_1$. 
Then, $\ker(R_1\to G_1)$ is a Klein 4-group 
$K_4$. We have $A_4=K_4\xs H$ with $H\simeq \bZ/3$ acting irreducibly on the 
$K_4$. This $H$ action 
extends to all the $M_k\,$s (see comment \S\ref{psplit}).  So, we differentiate 
between actions on the Klein
4-groups by writing write
$K_{4,H}$ for $K_4$ when the $H$ action is nontrivial. 

Consider the three distinct $\bZ/2$ Schur quotients $D_1$, $D_2$, $D_3$ for 
$G_1$. (The subscripts are
only decoration, not with the same meaning as in Prop.~\ref{abelianmod}.) One 
Schur quotient has antecedent the 
Schur multiplier of $A_4$ (at level 0). We identify this with
$D_1$.  The Loewy display for $\ker(G_1\to G_0)=M_0$ is $K_{4,H} \to 
K_{4,H}\oplus \one_{A_4}$
\cite[Cor.~5.7]{BFr}. 

We found the complete description of the $\hat M_{D_i}\,$s by finding three 
central
2-Frattini extensions of $M= K_{4,H}\oplus \one_{A_4}$, the 1st Loewy layer of 
$M_0$.  Then, we pulled these
back from the map $M_0\to M$, knowing that this would give all three possible 
Schur quotients. This makes it
clear the elements of the left most
$K_{4,H}$ lift to have order
$2$ in each of the
$\hat M_{D_i}\,$s. Here is the list.  
$$\begin{array}{rl}\hat M_{D_1} &= K_{4,H} \rightarrow K_{4,H} \oplus \bZ/4,\\ 
\hat M_{D_2} &= K_{4,H}  \rightarrow Q_8 \oplus \bZ/2,\\  
\hat M_{D_3} &= K_{4,H}  \rightarrow (Q_8 \cdot_{\bZ/2} \bZ/4).\end{array}$$
We have chosen to notation using a $\rightarrow$. It means
that quotienting out by an appropriate $\bZ/2$ center gives the corresponding 
Loewy display.

\subsection{Next steps in investigating Schur quotients} \label{nextsteps}
The followup paper will expand the application of  \cite[Prop.~9.8]{BFr} to 
$(G_0=A_5,
\bfC_{3^4}, p=2)$.  In this situation we refer to $\bg\in \ni(G_0,\bfC_{3^4})$ 
as a {\sl
perturbation\/} of an H-M rep.~if it has the form $(g_1,ag_1^{-1}a,bg_2b,g_2^{-
1})=\bg_{a,b} \in
\ni(G_1,\bfC_{3^4})^\rd$ and it lies over $$(g'_1,(g_1')^{-1},g'_2,(g_2')^{-
1})\in \ni(A_5,\bfC_{3^4})^\rd,
\text{ with } g_1'=(1\,2\,3) \text{ and }
g_2'=(1\,4\,5).$$  \cite[Prop.~9.8]{BFr} referenced H-M reps.~to find and 
characterize the resulting 
$\bar M_4$ orbits: $\bar O_1$ (genus 12) and $\bar O_2$ (genus 9).  We describe 
this in the
language above. 

Let
$R_{D_0}$ be the Frattini central extension of $G_1$ with antecedent the spin 
cover $\hat A_5=\SL_2(\bZ/5)$
of
$A_5=\PSL_2(\bZ/5)$. So, $D_0$ gives an abelian quotient. Then, with no loss, we 
may choose $a,b\in
M_0\setminus V_{D_0}$ for the representatives of form $\bg_{a,b}$ as above.  
There are two orbits for
the action of $A_5$ on
$M_0\setminus V_{D_0}$, called $M_3'$ (centralize elements of order 3) and
$M_5'$ (centralize elements of order 5). The 
16 elements $\bg_{a,b}\in \bar O_1$ (resp.~$\in \bar O_2$) have $a$ and $b$ in 
the same
(resp.~in different) conjugacy classes from
$M_3'$ and $M_5'$. 

We say in \S\ref{VDs} that the case $(G_k(A_4),\bfC_{3^4})$, with $k=1$, has a 
more complicated Schur
quotient  structure as $k$ rises. In the expansion of this paper we extend this 
to the
abelian cases at all levels of the $(A_4,\bfC_{3^4},p=2)$ and 
$(A_5,\bfC_{3^4},p=2)$ Modular Towers. We
know the number of Schur quotients of $G_k(A_4) $ rises with $k$. 
This is true for any {\sl
non-dihedral-like\/} split $G_0$ and any prime $p$ \cite{Darren}. 
\cite[\S5.7]{BFr} has a precise quote
when $p=2$ and a characterization of the phrase {\sl dihedral-like\/}.  We don't 
know yet how to extend it
to the nonabelian
$\bZ/2$ Schur quotients. We don't know yet if for $k\ge 1$,  
$G_k(A_5)$ [original paper had a typo, with $A_4$ in place of $A_5$] 
has nonabelian $\bZ/2$ Schur quotients, though we 
suspect so. [Added 9/01/04: Darren Semmen has shown for $k\ge 3$ there are 
nonabelian $\bZ/2$ Schur quotients. The 
argument  shows there
will be for any prime $p$ at suitably high levels for non-dihedral-like groups.] 
For all non-split
universal $p$-Frattini covers (assume $p$-perfect) it is [still] a mystery what to 
expect of their Schur
quotients.

\section{The diophantine goal} \label{dioph} It is instructive to see how the 
diophantine aspects of the
long studied modular curves work. I base my remarks on \cite[Chap. 5]{SeMW}. 

\subsection{Setup diophantine questions} Question \ref{naiveQuest2} is a
version of the Main Conjecture. When $r=4$ the \S\ref{MainConjTruth}  outline  
gives a good sense of it
holding. The tests are for how properties of the universal
$p$-Frattini cover of the finite group $G$ contributes to the conjecture 
holding. \cite{BFr} tested many
examples related to the case $G=G_0$ is an alternating group,
$p=2$ and the conjugacy classes have odd order. Details of those examples mirror 
what one
often finds in papers on modular curves. An extra complication is that levels of 
a Modular Tower
can have several components. Here's an example
problem that is tougher than it first looks. 

\begin{prob} \label{naiveQuest1} For {\sl each\/} number field $K$, find an easy 
argument for constructing 
a nonsingular curve  $X$ over $K$ with $0< |X(K)| < \infty$ (nonempty but 
finite). 
\end{prob}

Suppose $G_0$ is a $p$-perfect group, and $\bfC$ are $p'$ conjugacy
classes of $G_0$, such that all levels of the Modular Tower for $(G_0,\bfC,p)$ 
are nonempty.

\begin{prob} \label{naiveQuest2}  Give an elementary argument that for
any number field $K$, $\sH(G_k,\bfC)^\rd(K)=\emptyset$ for $k$ large.   
\end{prob} 
Let  $\bar \sH(G_k,\bfC)^\rd=\bar \sH_k$ be a nonsingular projective closure of 
$\sH(G_k,\bfC)^\rd$. Even
though
$\sH(G_k,\bfC)^\rd(K)=\emptyset$, there may rational points on the cusps of 
$\bar \sH(G_k,\bfC)^\rd$. 

\begin{prob} \label{naiveQuest3} Same hypotheses as Ques.~\ref{naiveQuest2}. 
Show, at suitably high
levels of the tower, all components have general type. 
\end{prob} 

We mean in Prob.~\ref{naiveQuest1}  to avoid such
heavy machinery as Falting's Theorem (proof of the Mordell Conjecture) or the 
Merel Theorem. We don't,
however, expect a completely trivial argument for a general number field $K$.  
The property states that number fields are not {\sl ample\/} (Pop's 
nomenclature). The goal is 
to produce a {\sl witnessing\/} nonsingular curve $X$ (given $K$) explicitly. 
This topic arose from
\cite[\S3.2.1]{DebBB}. We trace a set of ideas from Demjanenko and Manin as 
appropriate to our main point.
Then, we connect Problems 
\ref{naiveQuest1} and \ref{naiveQuest2}.

\subsection{Outline of key points of \cite{Manin}-\cite{Dem}} \label{mandem} The 
exposition from
\cite[Chap.~5]{SeMW} is convenient for this, especially for its review of 
effective aspects 
of Chabauty \cite{cha}. 

\begin{thm}[Chabauty] \label{chabauty} Suppose a curve $X$ generates an abelian 
variety $A$ and $\Gamma$ is
a finitely generated subgroup of $A(K)$ with $\rank(\Gamma) <\dim A=g'$. Then, 
$X(K)\cap \Gamma < \infty$. 
\end{thm}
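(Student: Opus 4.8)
The plan is to use the classical $p$-adic (Chabauty--Skolem) method. First I would choose a prime $p$ of good reduction for $X$ and $A$, and embed $X$ into its Jacobian $J$ via a $K$-rational base point (or, if need be, pass to a finite extension to get one; since finiteness of $X(K)\cap\Gamma$ is inherited, this is harmless, and in the stated setup $X$ already generates $A$ so we may work with the induced map $X\to A$). The point is to work inside $A(K_v)$ for a place $v\mid p$, a $g'$-dimensional $p$-adic Lie group. Let $\bar\Gamma$ be the closure of $\Gamma$ in $A(K_v)$. Because $\Gamma$ is finitely generated of rank $\rho=\rank(\Gamma)<g'$, the group $\bar\Gamma$ is a $p$-adic Lie subgroup of dimension at most $\rho<g'=\dim A$; in particular it is a proper analytic subgroup.

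Next I would bring in the space of $v$-adic invariant differentials $H^0(A_{K_v},\Omega^1)$, which has dimension $g'$, and integrate them: each $\omega$ gives a locally analytic homomorphism $\lambda_\omega=\int\omega\colon A(K_v)\to K_v$, and the pairing $(\omega,\Gamma)\mapsto \lambda_\omega(\Gamma)$ has kernel (on the $\omega$ side) of dimension $\ge g'-\rho\ge 1$. Pick a nonzero $\omega_0$ killing $\Gamma$ (hence killing $\bar\Gamma$, by continuity). Pulling $\omega_0$ back to $X$ gives a nonzero regular differential $\eta$ on $X_{K_v}$ whose integral vanishes on every point of $X(K)\cap\Gamma$. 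On each residue disc the function $z\mapsto \int_{P_0}^{z}\eta$ is given by a nonzero convergent power series (nonzero because $\eta\ne 0$ and $X\to A$ is a closed immersion onto a curve generating $A$, so $\eta$ does not vanish identically on any disc), hence has only finitely many zeros in that disc by the $p$-adic Weierstrass preparation theorem; summing over the finitely many residue discs of $X(\mathbb{F}_v)$ bounds $\#\big(X(K_v)\cap Z(\lambda_{\omega_0})\big)$, and a fortiori $\#\big(X(K)\cap\Gamma\big)$, by a finite quantity.

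The main obstacle — and the step deserving the most care — is verifying that the pulled-back differential $\eta$ really is not identically zero on any residue disc of $X$, i.e.\ that the "Chabauty functions" are genuinely nonconstant. This is exactly where the hypothesis $\rank(\Gamma)<\dim A$ enters twice: once to guarantee a nonzero $\omega_0$ annihilating $\Gamma$, and once (via $X$ generating $A$) to ensure $\omega_0$ does not lie in the — possibly trivial, but in general to be controlled — subspace of differentials pulling back to $0$ on $X$; since $X$ generates $A$, the map $H^0(A,\Omega^1)\to H^0(X,\Omega^1)$ is injective, so $\eta\ne 0$. A secondary technical point is the choice of $p$: one wants good reduction and, for the cleanest bounds, $p>e_v+1$ or similar so that the formal-group logarithm converges on the whole residue disc; but for mere finiteness any prime of good reduction suffices after shrinking to a subgroup of finite index in $\Gamma$, which again does not affect finiteness of the intersection.
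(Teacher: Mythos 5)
Your proposal captures the same central Chabauty mechanism as the paper's proof: use the rank hypothesis $\rank(\Gamma)<g'$ to produce a nonzero invariant differential whose abelian integral kills $\bar\Gamma$, then use the hypothesis that $X$ generates $A$ to guarantee the pullback to $X$ is not identically zero. Where you diverge is in the final finiteness step. The paper (following Serre's \emph{Mordell--Weil} exposition) argues by contradiction from an accumulation point: if $X\cap\bar\Gamma$ were infinite, compactness gives a limit point $P_0$, and after a linear change of coordinates on the uniformizing chart the infinitely many intersection points force the coordinate $x_1$ to vanish identically on a neighborhood of $P_0$ in $X$; this makes $\omega_1=dx_1$ pull back to zero near $P_0$, hence (by irreducibility) to zero on $X$, contradicting the fact that a nonzero pullback differential has at most $2g(X)-2$ zeros. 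You instead give the now-standard Chabauty--Coleman residue-disc argument: choose a place of good reduction, bound the zeros of the Chabauty function $\int\eta$ disc-by-disc via $p$-adic Weierstrass preparation, and sum over the finitely many residue discs of $X(\mathbb{F}_v)$. Your route has the advantage of being explicitly effective (it produces a numerical bound, in the spirit of Coleman, which the paper only references afterwards); the paper's route avoids good-reduction hypotheses and Weierstrass preparation entirely, trading them for a soft compactness argument that yields bare finiteness. Both are correct; the difference is genuine in the concluding step though not in the governing idea.
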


\begin{proof}[Explicit aspects] Embed $K$ in a finite extension of $\bQ_p$ and 
regard it as that finite
extension. Assume $0_A\in X(K)$, and let $\omega_i$, $i=1,\dots,g'$, be a basis 
of holomorphic differentials
on $A$. These uniformize by $(\sO_L)^{g'}$ an open subgroup $U\subset A(L)$ in 
the $p$-adic topology via the
map
$P\in A(L) \mapsto \int_{0_A}^P(\row \omega {g'})$. With $\bar\Gamma$ the 
closure of $\Gamma$, suppose 
$X\cap\bar\Gamma$ is infinite. That gives a sequence of distinct points 
$P_i\mapsto P_0\in X\cap
\bar \Gamma$. Suppose $d<g'$ is the rank of the free group $\Gamma\cap U$. 
Change coordinates on $U$ so
the points $\gamma=(x_1,...,x_{g'})\in U\cap\bar\Gamma$ satisfy $x_1=0$ in a 
neighborhood of
$0_A\in X$. Then the analytic curve intersects $x_1=0$ in infinitely many 
points. This implies $x_1=0$ in
a neighborhood of $0_A$ in $X$. Since, however, $X$ generates $A$, the pullback 
of $dx_1=\omega_1$ is a
nontrivial holomorphic differential. So, it has at most $2g(X)-2$ zeros. 
\end{proof} 

\cite{col} uses Thm.~\ref{chabauty} to
effectively bound the number of points on some curves, an analysis that includes 
finding a bound
on $\rank(\Gamma)$ and bounding the number of torsion points of $A$ that might 
meet $X$. An effective Manin
Corollary comes down to effectively bounding the rank of
$\Pic^{(0)}(X_0(p^{k_0+1}))(K)$. The Weak-Mordell Weil Theorem gives such a 
bound \cite[p.~52, \S4.6]{SeMW}. 
Here, and for Modular Towers, we don't care that the bound does not precisely 
give the rank. 

The
Manin-Demjanenko argument reverts to Chabauty. We can see specific parameters we
must compute to make Thm.~\ref{chabauty} apply effectively. It starts by 
assuming
$X$ is any projective nonsingular absolutely irreducible variety over $K$, and 
$A$ is an abelian variety
over
$K$ for which there are morphisms $\row f m: X\to A$ defined over $K$. With 
$P_0\in X(K)$ assume $\bar
f_i=f_i-f_i(P_0)$ satisfy these properties. 
\begin{edesc} \label{MDhypoth} \item \label{MDhypotha} If $\sum_{i=1}^m n_i\bar 
f_i$ is zero on
$X$, $\row n m\in \bZ$, then 
$(\row n m)$ is 0.  
\item \label{MDhypothb} The rank of the divisor classes modulo algebraic 
equivalence on $X$ (the
N\'eron-Severi group) is one. 
\end{edesc} 
If $X$ is a curve, hypothesis \eql{MDhypoth}{MDhypothb} is automatic. Depending 
on your patience, the
following results are effective \cite[p.~63]{SeMW}. 

\begin{thm}[Conclusions of Manin-Demjanenko] If $m> \rank(A(K))$, then $X(K)$ is 
finite.
\end{thm}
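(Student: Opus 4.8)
The plan is to reduce the statement to Chabauty's theorem (Theorem~\ref{chabauty}) applied to the image of $X$ under a cleverly chosen morphism to a power of $A$, exactly as in \cite[Chap.~5]{SeMW}. Concretely, form the product map $F = (\bar f_1,\dots,\bar f_m): X \to A^m$, and let $B \subseteq A^m$ be the abelian subvariety generated by the image $F(X)$ (i.e.\ the Zariski closure of the subgroup generated by $F(X)-F(X)$, which is an abelian subvariety since $F(P_0)=0$). The first step is to check that $B$ has dimension $>\operatorname{rank} A(K)$ for a suitable choice of $F$, or rather to arrange that the relevant finitely generated subgroup $\Gamma$ of $B(K)$ has rank strictly less than $\dim B$; hypothesis \eql{MDhypoth}{MDhypotha} is precisely what prevents $F(X)$ from collapsing into a proper "diagonal'' subvariety and forces $\dim B$ to be large relative to the rank coming from $A(K)$.

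The key steps, in order: \textbf{(1)} Using \eql{MDhypotha}{MDhypotha}, show that $F$ is non-degenerate in the sense that no nonzero $\bZ$-linear combination $\sum n_i \bar f_i$ vanishes identically; deduce that the composite of $F$ with any surjection $A^m \thd A$ given by $(a_i)\mapsto \sum n_i a_i$ is a non-constant morphism $X\to A$, so by \eql{MDhypothb}{MDhypothb} (which, as noted, is automatic for a curve) the pullbacks of a holomorphic differential under these maps are honestly nonzero. \textbf{(2)} Let $\Gamma_0 = \langle f_1(P), \dots, f_m(P) : P\in X(K)\rangle \subseteq A(K)$; this is finitely generated since $A(K)$ is (Mordell–Weil), of rank $\rho \le \operatorname{rank} A(K) < m$. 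The image $F(X(K))$ lands in the subgroup $\Gamma = \Gamma_0^{\oplus m} \cap B(K)$, but more to the point lands in the "diagonal-type'' finitely generated subgroup generated by $\{(f_1(P),\dots,f_m(P))\}$, whose rank is at most $\rho < m$. \textbf{(3)} Now invoke Chabauty's theorem for the curve $X$ mapped into $B$ (or into $A^m$) against this subgroup $\Gamma$: since $\dim B \ge m$ (this is where non-degeneracy is used — $F(X)$ generates a $B$ whose dimension is at least $m$, because a smaller $B$ would force a nontrivial linear relation among the $\bar f_i$) while $\operatorname{rank}\Gamma \le \rho \le \operatorname{rank} A(K) < m \le \dim B$, the rank hypothesis of Theorem~\ref{chabauty} is satisfied. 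Conclude $F(X(K)) \subseteq F(X) \cap \Gamma$ is finite, and since $F$ is a morphism from a curve that is non-constant (indeed generically injective after the non-degeneracy reduction, or at worst finite-to-one), $X(K)$ itself is finite.

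The main obstacle I expect is step~(3)'s dimension bookkeeping: one must verify carefully that $\dim B \geq m$ — equivalently that the graph $F(X)$ is not contained in a proper abelian subvariety of $A^m$ of dimension $< m$ — and this is exactly the content extracted from \eql{MDhypoth}{MDhypotha}, but turning "no $\bZ$-linear relation among the $\bar f_i$'' into "the generated abelian subvariety has dimension $\ge m$'' requires knowing that the $\bar f_i$, viewed in $\operatorname{Hom}(J_X, A)\otimes \bQ$ (with $J_X$ the Jacobian, using \eql{MDhypoth}{MDhypothb} to control Néron–Severi and hence the polarization-compatible homomorphisms), are $\bQ$-linearly independent, so that the induced map $J_X \to A^m$ has image of dimension $\ge m$. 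For a curve this is a standard but slightly delicate argument about the image of $X$ generating its Jacobian; once that is in hand, the effectivity claims follow by tracking the effective constants through Chabauty (bounding $\operatorname{rank} A(K)$ via Weak Mordell–Weil, bounding torsion, and bounding zeros of the relevant differential by $2g(X)-2$) as indicated in the preceding discussion. The effectivity is therefore "as effective as Chabauty,'' which is the point being made for the Modular Tower application.
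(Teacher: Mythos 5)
The paper does not give a proof of this theorem; it simply cites \cite[p.~63]{SeMW}. Serre's argument there is a \emph{height} argument, not a Chabauty argument: one fixes a symmetric ample line bundle on $A$, uses the resulting N\'eron--Tate pairing $\lrang{\cdot,\cdot}$ on $A(K)\ot\bR$, and considers for $P\in X(K)$ the $m\times m$ Gram matrix $\bigl(\lrang{f_i(P),f_j(P)}\bigr)$. Because the $f_i(P)$ lie in $A(K)\ot\bR$, which has dimension $\rank A(K) < m$, this matrix is singular. On the other hand, hypothesis \eql{MDhypoth}{MDhypothb} forces the leading term of its determinant to be $c\cdot h(P)^m + O(h(P)^{m-1/2})$ for a fixed height $h$, with $c>0$ precisely because (by \eql{MDhypoth}{MDhypotha}) the $\bar f_i$ are linearly independent in the positive-definite height pairing on $\Hom(J_X,A)\ot\bQ$. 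Hence $h(P)$ is bounded and $X(K)$ is finite by Northcott. The phrase ``reverts to Chabauty'' in the paper is about effectivity being analogous; the proof is not a reduction to Thm.~\ref{chabauty}.

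Your Chabauty route has a gap that cannot be patched. The hypothesis of Thm.~\ref{chabauty} is $\rank(\Gamma) < \dim B$, and the natural $\Gamma$ here is (the free part of) $B(K)$, since $F(X(K))\subset B(K)$. In the cleanest setting — $A$ simple with $\End(A)=\bZ$, so that \eql{MDhypoth}{MDhypotha} does force $B=A^m$ — one gets $\dim B = m\dim A$ and $\rank B(K) = m\,\rank A(K)$, so the Chabauty inequality becomes $\rank A(K) < \dim A$. That is Chabauty for $A$ itself, and it is unrelated to the Manin--Demjanenko hypothesis $m > \rank A(K)$. Concretely: take $A$ an elliptic curve with $\rank A(K)=1$ and $m=2$. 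Manin--Demjanenko applies (and this is exactly the new strength over Chabauty, which needs $\rank A(K)<1$), while your bookkeeping gives $\rank B(K)=2 = \dim B$, so Thm.~\ref{chabauty} cannot be invoked. Two further steps in your write-up are also unjustified: the claim $\rank\Gamma\le\rho$ for the ``diagonal-type'' subgroup is false in general (the tuples $(f_1(P),\dots,f_m(P))$ as $P$ varies can span a rank up to $m\,\rank A(K)$ in $A(K)^m$), and the claim $\dim B\ge m$ fails when $\End(A)\ne\bZ$ — e.g.\ $f_2 = i\circ f_1$ for a CM elliptic curve gives $\bZ$-independent $\bar f_1,\bar f_2$ but $\dim B = 1 < m = 2$. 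The essential idea you are missing is that one does \emph{not} get finiteness by making a $p$-adic analytic curve meet a small subgroup; one gets it by forcing a determinantal identity among canonical heights to degenerate, which bounds heights directly.
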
 

\begin{cor}[Manin] \label{manResult} If $k(p,K)$ is large, then 
$X_0(p^{k+1})(K)$ is finite. So,
we can choose $k(p,K)$ effectively dependent on bounding on 
$\Pic^{(0)}(X_0(p^{k_0+1}))$. \end{cor}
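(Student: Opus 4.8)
The plan is to derive Corollary~\ref{manResult} (the Manin statement) directly from the Manin--Demjanenko theorem just proved, by exhibiting enough independent morphisms from $X_0(p^{k+1})$ to a fixed abelian variety $A$ whose Mordell--Weil rank is under control. First I would take $X=X_0(p^{k+1})$ with base point $P_0$ a rational cusp (these always exist on $X_0(N)$), and I would let $A=J_0(p^{k_0+1})=\Pic^{(0)}(X_0(p^{k_0+1}))$ for a fixed auxiliary level $k_0$, viewed as defined over $K$. Since $X_0(p^{k+1})\to X_0(p^{k_0+1})$ is a covering of modular curves, there are the two classical degeneracy maps (multiplication by $p$ on the upper half plane and the identity) inducing morphisms $X_0(p^{k+1})\to X_0(p^{k_0+1})$, and more generally one gets a supply of maps indexed by the cosets $p^j$, $0\le j\le k-k_0$; composing with the Abel--Jacobi embedding $X_0(p^{k_0+1})\hookrightarrow A$ produces morphisms $f_0,\dots,f_{k-k_0}: X_0(p^{k+1})\to A$. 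So the number $m$ of available maps grows linearly in $k$.

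Next I would verify the two hypotheses \eql{MDhypoth}{MDhypotha} and \eql{MDhypoth}{MDhypothb}. Hypothesis \eql{MDhypoth}{MDhypothb} is automatic since $X$ is a curve, as the excerpt already notes. For \eql{MDhypoth}{MDhypotha} — linear independence over $\bZ$ of the $\bar f_i=f_i-f_i(P_0)$ in $\Hom(X,A)\otimes\bZ$, equivalently in $\Hom(J_0(p^{k+1}),A)\otimes\bZ$ — I would appeal to the Hecke-module structure: the degeneracy maps at distinct powers $p^j$ induce, on the new part of $J_0$, operators that are Eichler--Shimura-independent, so a nontrivial relation $\sum n_j\bar f_j=0$ would force a relation among the corresponding Hecke correspondences, which does not hold for the new subvariety. (This is the classical input behind Manin's original argument; I would cite \cite[Chap.~5]{SeMW}.) With these two hypotheses in hand, the Manin--Demjanenko theorem says: if $m=k-k_0+1>\rank(A(K))$, then $X_0(p^{k+1})(K)$ is finite.

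Finally I would close the loop on effectivity. The rank of $A(K)=\Pic^{(0)}(X_0(p^{k_0+1}))(K)$ is bounded by the Weak Mordell--Weil theorem \cite[p.~52, \S4.6]{SeMW}; call this bound $B(p,K,k_0)$. Then any $k$ with $k-k_0+1>B(p,K,k_0)$, i.e. $k\ge k(p,K):=k_0+B(p,K,k_0)$, works, and $k(p,K)$ is computed purely from the Weak Mordell--Weil bound on the fixed Jacobian $\Pic^{(0)}(X_0(p^{k_0+1}))$. As the excerpt stresses, it does not matter that this bound overshoots the true rank. This yields the stated corollary.

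The step I expect to be the main obstacle is \emph{rigorously} establishing \eql{MDhypoth}{MDhypotha}: one must show the chosen family of degeneracy-plus-Abel--Jacobi morphisms is genuinely $\bZ$-linearly independent, not merely generically distinct, which requires the Eichler--Shimura description of how the degeneracy operators act on $H^0(\Omega^1)$ (or on the Tate module) and the multiplicity-one / newform theory guaranteeing no accidental relations at the fixed level $k_0$. Everything else — choice of rational cusp as base point, the curve case of \eql{MDhypoth}{MDhypothb}, and the Weak Mordell--Weil rank bound — is routine or quoted, so the weight of the proof sits on producing, for each large $k$, more than $\rank(A(K))$ provably independent maps to the one fixed abelian variety $A$.
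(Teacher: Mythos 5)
Your global strategy matches the paper's: take $X=X_0(p^{k+1})$, fix an auxiliary level $k_0$, set $A=\Pic^{(0)}(X_0(p^{k_0+1}))$, build the $f_i$ from degeneracy maps to level $k_0+1$ composed with Abel--Jacobi, and invoke Manin--Demjanenko together with the Weak Mordell--Weil bound on $\rank(A(K))$ to get effectivity; all of this is the same. Where you diverge is at exactly the step you flag as ``the main obstacle,'' hypothesis \eql{MDhypoth}{MDhypotha}. You propose to get $\bZ$-independence of the $\bar f_i$ from Hecke-module structure, Eichler--Shimura, and multiplicity-one/newform theory. The paper (following \cite[p.~68]{SeMW}) does something much lighter: fix one nonzero holomorphic $\omega$ on $A$, write it as $f(q)\,dq$ near a cusp with $f(0)=0$, and observe that $f_i$ transforms the local parameter $q$ to $q^{p^i}$; hence the pullbacks $f_i^*(\omega)$ have pairwise distinct leading orders in $q$, so they are linearly independent over $\bC$, which is strictly stronger than the $\bZ$-independence of the $\bar f_i$ that \eql{MDhypoth}{MDhypotha} requires. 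This $q$-expansion argument is elementary, self-contained, and needs no input from newform theory or Eichler--Shimura; your route is plausible in spirit but harder to make precise (one would have to pin down what ``Hecke correspondences on the new subvariety'' buys you about the degeneracy maps, which actually mix old and new parts), and it imports machinery the paper deliberately avoids. So: same architecture, but the single lemma you expected to be the weight-bearing wall is, in the paper's treatment, a one-line computation with $q$-expansions rather than an appeal to multiplicity one.
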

\begin{proof}[Key points] Choose $k_0$ so the genus of $X_0(p^{k_0+1})$ exceeds 
0. 
In parallel with the application to Modular Towers, \cite[Lem.~2.20]{BFr} 
computes the cusp widths of
$X_0(p^{k_0+1})$ (and so the growth of its genus) from a Hurwitz monodromy 
viewpoint. Let
$A=Pic^{(0)}(X_0(p^{k_0+1}))$. 

Assume the rank of
$\Pic^{(0)}(X_0(p^{k_0+1}))(K)$ is $m'$, and let $k(p,K)=k_0+m'+1$. We form the 
maps $f_i$,
$i=1,...,m'+1$. They all go from $X_0(p^{k_0+m'+1})$ to 
$\Pic^{(0)}(X_0(p^{k_0+1}))$. Represent points on
$X_0(p^{k_0+m'+1})$ by $(E,\lrang{\bp_{k_0+m'+1}})$, a 2-tuple consisting of an 
elliptic curve and a
$p^{k_0+m'+1}$ torsion point on it.  Let 
$\bp_i$ generate the cyclic subgroup of $\lrang{\bp_{k_0+m'+1}}$ of order
$p^i$. Then, 
$f_i$ maps the point $(E,\lrang{\bp_{k_0+m'+1}})$ to 
$(E/\lrang{\bp_{i}},\lrang{\bp_{k_0+1+i}}/\lrang{\bp_{i}}$
(then to $\Pic^{(0)}$ of $X_0(p^{k_0+1})$). 

Suppose $\omega$ is a nonzero holomorphic differential on $A$. Then these 
$f_i^*(\omega))$, $i=1,\dots,m'$, are
linearly independent over $\bC$. To see this take any cusp and express $\omega$ 
in a neighborhood of that cusp
as a power series
$f(q)dq$ with
$q=e^{2\pi i z}$, $z$ close to $i\infty$. The condition for analyticity of 
$\omega$ at the cusp is
that $f(q)$ is analytic and $f(0)=0$.  Now consider the pullbacks 
$\omega_i=f_i^*(\omega))$. The
argument of
\cite[p. 68]{SeMW} is that $f_i^*$ transforms $q$ to $q^{p^i}$; so the  
$\omega_i\,$s are linearly
independent as power series in the variable $q$ around the cusp.  
\end{proof}

\subsection{Using Demjanenko-Manin on Modular Towers} Each 
$X_0(p^{k+1})$ has cusps of unique widths that give rational points. This gives 
a relatively elementary
construction of curves that demonstrably have some, but only finitely many, 
rational points over $K$. This is
an acceptable answer to  Prob.~\ref{naiveQuest1}, though it is not so explicit a 
result as in 
\cite{col}.  In the expanded version of this paper we apply the argument of 
Cor.~\ref{manResult} to the
abelianization of a Modular Tower with
$r=4$ \cite[\S4.4.3]{BFr}.  It is crucial 
that the genus of tower components goes up (as outlined in
\S\ref{MainConjTruth}). This is motivation for developing explicit versions on 
that argument. 

An even greater potential occurs in using such an argument for $r\ge 5$. The
hypotheses of \eqref{MDhypoth} do not assume $X$ is a curve. An interesting 
consideration is
whether some version of \eql{MDhypoth}{MDhypothb} holds at high levels of a 
Modular Tower for a $p$-perfect
group $G=G_0$. We guess that at high levels the albanese varieties of (a 
nonsingular closure of) $\sH_k^\rd$
will not have a unique polarization. Are there are other approaches to this 
hypothesis?

\section{Finding projective systems of components over $\bQ$} 
\label{projsystems} Let
$\{G_k\}_{k=0}^\infty$ be the characteristic sequence of $p$-Frattini quotients 
of $\tG p$. Here is a
result from \cite{FrKMTIG}. 

\begin{prop} Let $r_0$ be any positive integer. Suppose each
$G_k$ has a
$K$  regular realization with $K$ a number field and no more than $r_0$ branch 
points. Then,
there exists a Modular Tower based on $(G_0,\bfC)$ with $r\le r_0$ conjugacy 
classes and
$\bfC$ a set of $p'$ conjugacy classes. \end{prop}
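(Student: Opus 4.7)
The strategy is pigeonhole plus compactness on the hypothesized realizations, with a final step that forces the $p'$ condition.

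First, the branch cycle description of each $K$-regular realization of $G_k$ produces an inner Nielsen class representative $\bg_k$ lying in some tuple $\bfC^{(k)}$ of $r_k\le r_0$ conjugacy classes of $G_k$. Projecting through the Frattini cover $G_k\thd G_0$ sends $\bg_k$ to a tuple of generators of $G_0$ with product $1$, whose conjugacy classes form an $r_k$-tuple $\bar\bfC^{(k)}$ in $G_0$. Since $r_k$ and the number of conjugacy classes of $G_0$ are both bounded, a double pigeonhole extracts an infinite set $S$ of levels on which both the length $r\le r_0$ and the projected tuple $\bfC=(\C_1,\dots,\C_r)$ in $G_0$ are constant.

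Second, I form the inverse system $\{T_k\}_{k\ge 0}$ whose $k$-th term is the finite disjoint union, over all $G_k$-tuples $\bfC_k$ projecting to $\bfC$, of the inner Nielsen classes $\ni(G_k,\bfC_k)/G_k$, with transition maps induced by $G_{k+1}\thd G_k$. By construction $T_k$ is nonempty for every $k\in S$, and since realization data at higher $k$ projects to data at lower $k$, every $T_k$ is nonempty. K\"onig's lemma (equivalently, Tychonoff on the profinite limit) then yields a coherent projective system $\{\tilde\bg_k\}_{k\ge 0}$ of inner Nielsen representatives, and hence a compatible family of lifts $\bfC_k$ of $\bfC$ with $\ni(G_k,\bfC_k)\ne\emptyset$ for all $k$.

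The hard part is guaranteeing that $\bfC$ can be chosen to consist of $p'$ classes, as the Modular Tower definition requires (and as Theorem~\ref{pdivis} then makes the lifts $\bfC_k$ unique). The obstruction is that a branch cycle of order divisible by $p$ at level $k$ need not project to a $p'$ element of $G_0$; by Theorem~\ref{pdivis}, a non-$p'$ class at level~$0$ has lifts whose order grows by a factor of $p$ at each level, incompatible with the stable Nielsen data a Modular Tower needs. I would address this by refining $\bg_k$ \emph{before} applying pigeonhole: whenever the projection of $g_{k,i}$ to $G_0$ has non-trivial $p$-part, replace $g_{k,i}$ in the tuple by the adjacent pair $(g_{k,i}^{p'},g_{k,i}^{(p)})$ coming from the commuting $p'/p$-part factorization of $g_{k,i}$. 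The refined tuple still generates $G_k$ with product $1$, and its $G_0$-projection cleanly separates $p'$-parts from $p$-power parts. Appealing to the $p$-perfectness of $G_0$ (and hence of all $G_k$, by the last sentence of Theorem~\ref{pdivis}), one then expects the $p$-power entries to be reabsorbable via Nielsen braiding moves that preserve inner equivalence, yielding a refined tuple whose projected classes are entirely $p'$. Keeping the refined length $\le r_0$ uniformly across levels is the delicate accounting step, and is the main technical obstacle of the argument.
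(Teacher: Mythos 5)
The paper states this proposition without proof, citing Fried--Kopeliovic \cite{FrKMTIG}, so there is no ``paper's own proof'' to measure you against; I assess the proposal on its own terms.

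Your first two steps --- double pigeonhole on the $G_0$-projection of the branch cycle classes, followed by K\"onig's lemma to pass from ``each finite level nonempty'' to a coherent projective system --- are the standard skeleton and are fine. The gap, which you yourself flag as ``the main technical obstacle,'' is real, and the specific repair you propose for the $p'$-condition cannot work. Braiding (the $\bar M_r$-action of shift and twists) preserves the unordered multiset of conjugacy classes underlying a Nielsen tuple; that is exactly why a Nielsen class $\ni(G,\bfC)$ is stable under braiding. So once you have split a non-$p'$ entry $g_i$ into the adjacent pair $(g_i^{p'},g_i^{(p)})$, no sequence of braid moves will ever eliminate the $p$-power entry or change its class. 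You have lengthened the tuple, possibly past $r_0$, and there is no mechanism to shrink it back. The $p$-perfectness of $G_0$ does not give you a ``reabsorption'' operation; it only says $G_0$ admits some system of $p'$ generators, with no control on their number or on compatibility with the Nielsen relation $g_1\cdots g_r=1$.

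The intended argument is of a different kind: rather than converting a possibly non-$p'$ tuple into a $p'$ one, one shows that the tuple $\bfC$ produced by the pigeonhole is forced to be $p'$ already. The leverage is exactly the order-growth you cite from Theorem~\ref{pdivis} in combination with the branch cycle lemma over the fixed number field $K$: if $\bfC$ contained a class whose elements have order divisible by $p$, then the corresponding branch cycle at level $k$ has order divisible by $p^{k+1}$, and the realizations being over the \emph{fixed} $K$ imposes a cyclotomic rationality constraint on that class whose cost grows with $k$ while the number of available classes remains $\le r_0$. Turning that tension into a contradiction is the genuine content, and it does not reduce to braiding. As written, your proof plan stops short of this and substitutes a step that is not merely incomplete but false, so the argument does not close.
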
 

If there are rational points at each level of a Modular Tower, then there is a 
projective system of 
(we always mean absolutely irreducible in this subsection)  $K$ components at 
each level.

\begin{defn} Call $\bfC$ {\sl gcomplete\/} if any subgroup meeting all conjugacy
classes  in it is all of $G$. It is {\sl $p$-gcomplete\/} if any subgroup 
meeting each $p'$
conjugacy class is automatically all of $G$.  \end{defn}

\begin{exmp} The group $A_5$ is 2-gcomplete. It is not, however, 3-gcomplete or 
5-gcomplete since it
contains $D_5$ and $A_4$. Further, if $G_0$ is $p$-gcomplete, then each of the 
characteristic
$p$-Frattini covers $G_k\to G_0$ is also $p$-gcomplete. \end{exmp} 

\begin{defn} Call $\bfC$  H-M-gcomplete (resp.~H-M-p-gcomplete) if upon removing 
any two
distinct inverse conjugacy classes $\C_i, \C_j$, the remaining conjugacy classes
$\bfC_{i,j}$ are gcomplete (resp.~p-gcomplete).\end{defn} 

We review a result from \cite[Thm.~3.21]{FrMT} on H-M reps.~(\S\ref{ramindex}). 

\begin{prop} Suppose at each level of a Modular Tower for the prime $p$, the H-M 
reps.~fall in one $H_r$
(Hurwitz monodromy) orbit $\bar O_{\rm HM}$. Then the component corresponding to 
$\bar O_{\rm HM}$ has
definition field $K$ where $K$ is the field of rationality of the conjugacy 
classes. This holds if the
conjugacy classes
$\bfC$  are $H-M-p$-gcomplete: Then there is a projective system of H-M 
components over $K$. \end{prop}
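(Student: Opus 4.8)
The plan is to prove the two assertions in turn: first that the H-M component is defined over the field of rationality $K$ of the conjugacy classes, and then that the $H-M-p$-gcomplete hypothesis forces the H-M representatives at each level to lie in a single $H_r$ orbit, so that the cited result of \cite[Thm.~3.21]{FrMT} applies and produces a projective system. For the first assertion I would invoke the branch cycle argument (the ``branch cycle lemma'' in the Fried--V\"olklein style): the absolute Galois group $G_K$ acts on the set of $H_r$-orbits on $\ni(G_k,\bfC)^{\inn,\rd}$ through its action on the conjugacy classes via the cyclotomic character, twisted by $H_r$-braiding. Since an element $\sigma\in G_K$ fixing all the $\C_i$ (which is exactly what $K$ being the field of rationality of $\bfC$ buys us) sends an H-M representative $(g_1^{-1},g_1,\dots,g_{r/2}^{-1},g_{r/2})$ to another tuple with entries in the same classes and, after the cyclotomic twist, still of H-M shape up to braiding, it must preserve the set of H-M representatives. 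If that set is a single $H_r$-orbit $\bar O_{\rm HM}$, then $\bar O_{\rm HM}$ is $G_K$-stable, hence the corresponding component is defined over $K$; this is the content of the first sentence of the proposition, which I am allowed to quote from \cite{FrMT}.

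The substantive step is therefore the second: showing that $H-M-p$-gcompleteness of $\bfC$ implies, level by level, that all H-M representatives in $\ni(G_k,\bfC)^{\inn,\rd}$ are $H_r$-equivalent. Here I would argue inductively on $k$. At level $0$ one checks directly that with $\bfC$ arranged as in \S\ref{ramindex} (so $\C_{2i}=\C_i'$, $\C_{2i-1}=(\C_i')^{-1}$), any two H-M tuples $(g_1^{-1},g_1,\dots,g_{r/2}^{-1},g_{r/2})$ and $(h_1^{-1},h_1,\dots,h_{r/2}^{-1},h_{r/2})$ with $g_i,h_i\in\C_i'$ are connected by braids: the shift $\sh$ and a twist $q_i$ can be used to permute and re-pair the blocks, and the residual freedom is absorbed by inner conjugation once we know $\lrang{g_1,\dots,g_{r/2}}=\lrang{h_1,\dots,h_{r/2}}=G_0$ — which is where $p$-gcompleteness enters, guaranteeing we never get ``stuck'' in a proper subgroup generated by a sub-collection of the classes. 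For the inductive step $k\to k+1$, one lifts: since each $\C_i$ pulls back to a unique conjugacy class of $G_{k+1}$ (Thm.~\ref{pdivis}), an H-M rep at level $k+1$ reduces to one at level $k$, which by induction lies in $\bar O_{\rm HM}$ at level $k$; then one uses that the fibre of $\sH_{k+1}^\rd\to\sH_k^\rd$ over that component, restricted to H-M points, is again a single $H_r$-orbit because the $p$-Frattini kernel $M_k$ is generated by the relevant commutators and the $H-M-p$-gcomplete condition survives pullback (the example remarks in the excerpt note $p$-gcompleteness is inherited by the $G_k$).

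The main obstacle I expect is precisely this last lifting/connectivity claim: controlling the $H_r$-action on the fibre $M_k$ finely enough to see that no H-M point at level $k+1$ splits off into a separate braid orbit. Removing two inverse classes $\C_i,\C_j$ and demanding the rest be $p$-gcomplete is exactly the hypothesis that lets one ``free up'' one H-M block and use it as a maneuvering handle — the standard trick for H-M connectedness — but making the Frattini-module bookkeeping rigorous at every level (rather than quoting it from \cite{FrMT}) is the delicate part. In the write-up I would lean on \cite[Thm.~3.21]{FrMT} for the reduction ``single orbit at every level $\Rightarrow$ projective system of $K$-components'' and concentrate the new argument on verifying its hypothesis from $H-M-p$-gcompleteness; if space is short, the honest move is to cite \cite[Thm.~3.21]{FrMT} for the whole implication and present the $H-M-p$-gcomplete condition as the clean combinatorial sufficient condition it is.
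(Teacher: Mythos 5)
The paper itself gives no proof of this proposition: it is introduced with the sentence ``We review a result from [FrMT, Thm.~3.21] on H-M reps.'' and is followed immediately by a remark, with no proof environment at all. So the paper's ``approach'' is exactly your fallback --- cite [FrMT] wholesale --- and there is nothing in the text to compare your detailed sketch against.

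A few comments on the sketch nonetheless. Your branch cycle argument for the definition-field clause is the right mechanism: the $G_K$-action on braid orbits factors through the cyclotomic action $g\mapsto g^{c(\sigma)}$ on conjugacy classes, and since this visibly carries an H-M tuple $(g_1^{-1},g_1,\dots)$ to the H-M tuple $(g_1^{-c},g_1^{c},\dots)$, a single H-M braid orbit is $G_K$-stable and its component descends to $K$. One small imprecision: $K$ being the field of rationality of $\bfC$ means $G_K$ permutes the $\C_i$ among themselves as a set, not that it fixes each $\C_i$ pointwise; for inner Hurwitz spaces this is what you actually need, but the phrase ``fixing all the $\C_i$'' overstates it. For the implication $H$-$M$-$p$-gcomplete $\Rightarrow$ one orbit at every level, the gap you flag at the end is in fact the entire theorem: the level-$0$ base case already needs the ``free up a block'' maneuver (two H-M tuples in the same classes are not automatically braid-connected just from $\sh$ and one twist plus conjugation), and the $k\to k+1$ lifting step --- controlling the braid action on the Frattini-kernel fibre so that H-M points above a single level-$k$ H-M orbit do not split --- is precisely the content of [FrMT, Thm.~3.21]. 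Since the paper does not reproduce that argument, your honest conclusion (cite [FrMT]) is the correct one, and also the one the paper takes.
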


\begin{rem} As in Debes-Deschamps \cite{DebDes}, for $K$ a discrete valued 
field, you can get a result
over 
$K$ for a $p$-perfect group. For $g \in G$ of order $n$ call $d$ the {\sl 
cyclotomic order\/} of $g$
if $d$ is minimal for $K(\zeta_d)=K(\zeta_n)$. Suppose $\row {\C'} t$ is a $p$-
gcomplete set
of $p'$ conjugacy classes. Let $d_i$ be the cyclotomic order of elements of 
$\C'_i$, $i=1,\dots,t$. 
With $r=2\sum_{i=1}^t [K(\zeta_{d_i}):K]$, there are $r$ conjugacy classes 
$\bfC$ so the Modular Tower for
$(G,\bfC,p)$ has a projective system of $K$ points. \end{rem}

\begin{prob} Suppose given $(G,p)$ with $G$  a $p$-perfect group. Is there a 
$\bfC$ (consisting of $p'$
conjugacy classes) for which the Modular  Tower for $(G,\bfC,p)$ has a 
projective system of $\bQ$
components?\end{prob} 

When $r=4$ these gcomplete criteria fail to provide  projective systems  
of $\bar M_4$ orbits over $\bQ$. As an example problem consider this. 
\begin{prob} Does the Modular Tower for $(A_5,\bfC_{3^4},p=2)$ have a projective 
system of $\bQ$ (absolutely
irreducible) components?
\end{prob}

\section{Remaining topics from 3rd RIMS lectures in October, 2001} 
\label{revRIMS} 
Again, consider $r=4$, so Modular Towers are upper half plane quotients. Modular 
Towers has
practical formulas for analyzing how $G_\bQ$ acts on projective systems of 
tangential base points
attached to cusps. The talk concentration included these two topics which will 
appear in an expanded
version of this paper. 

\begin{edesc} \item Geometrically interpreting the $\sh$-incidence matrix 
attached to a level. 
\item  The
Ihara-Matsumoto-Wewers (\cite[App.~D.3]{BFr} and \cite{WeFM} modifying 
\cite{IharMat}) formulas for $G_{\bQ}$ 
acting on the fiber over tangential base points attached to Harbater-Mumford 
cusps (\S\ref{ramindex}). 
\end{edesc} Some cusps so stand out, that we recognize their presence at all 
levels of some Modular
Towers, even associated with specific irreducible components and having in their 
$\bar M_4$ orbits
other related cusps.  \cite[Prop.~7.11]{BFr} describes a natural pairing on 
cusps attached to 
Harbater-Mumford (H-M) representatives and near H-M representatives at {\sl 
all\/} levels of specific
Modular Towers, when $p=2$. An H-M rep. ~has a Nielsen class representative of 
form $(g_1,g_1^{-1},
g_2,g_2^{-1})$ (when $r=4$). A near H-M rep.~at level $k+1$ is a Nielsen class 
element that defines a real
point though it is not an H-M rep. From the formulas of \cite[\S6]{BFr}, a near  
H-M rep. has a specific
form related to that of its corresponding H-M rep. 

This applies to Schur quotients $R_{D_k}$ at level $k$ when $G_0=A_5$. This is a 
case when the Schur
quotient has an antecedent (\S~\ref{schurMultStart} and \S\ref{antecedents}). 
Since $R_{D_k}$ has a
nontrivial center, although the Hurwitz spaces $\sH(R_{D_k},\bfC)$ and
$\sH(G_{k+1},\bfC)$ are the same complex analytic spaces, the former is not a 
fine moduli space. We 
therefore expect there are fields $K$ and points $\bp\in \sH(R_k,\bfC)(K)$ with 
no
representing cover over $K$. Indeed, when $K=\bR$, the H-M reps.~give real 
points with representing covers
over $\bR$, while near H-M reps.~give real points for which the corresponding 
covers cannot have definition
field $\bR$ \cite[Prop.~6.8]{BFr}. In a later paper we will produce the $p$-adic 
version of
this result. This applies to our final topic.

\cite[App.~D]{BFr} discusses the Hurwitz space viewpoint of
Serre's Open Image Theorem, especially how 
one piece of Serre's Theorem extends to considering projective systems of 
Harbater-Mumford components. 
While speculative, this cusp approach goes around difficulties that appear in 
Serre's approach. 
Specifically:  It avoids a generic abelian variety hypothesis (death to most 
sensible applications), and the
cusp type replaces impossibly detailed analysis of many types of $p$-adic Lie 
Algebras. Serre extensively
used a $p$-Frattini property of the monodromy groups of modular curves over the 
$j$-line. A Galois-Ihara
problem arises when we inspect if this generalizes to each Modular Tower. The 
appearance of {\sl Eisenstein
series\/} and {\sl theta nulls\/} brings us to the present research line 
\cite[App.~B]{BFr}.

\providecommand{\bysame}{\leavevmode\hbox to3em{\hrulefill}\thinspace}

\end{document}